\documentclass[reqno]{amsart}
\usepackage{hyperref}
\usepackage{mathrsfs}
\usepackage{mathtools}
\usepackage{pdfpages}

\begin{document}
\title[time-fractional g-Navier-Stokes equations]
{weak solutions to the time-fractional g-Navier-Stokes equations and optimal control}

\author[S. Ben Aadi]
{Sultana Ben Aadi}

\author[K. Akhlil]
{Khalid Akhlil}

\author[K. Aayadi]
{Khadija Aayadi}

\address{Plydisciplinary Faculty of Ouarzazate}

\address{sultana.benaadi@edu.uiz.ac.ma}
\address{k.akhlil@uiz.ac.ma}
\address{khadija.aayadi@gmail.com}

\date{\today}
\thanks{}
\subjclass[2000]{}
\keywords{Matrix Operator,Fractional Evolution Equations}

\begin{abstract}
In this paper we introduce the $g-$Navier-Stokes equations with time-fractional derivative of order $\alpha\in(0,1)$ in domains of $\mathbb R^2$. We then study the existence and uniqueness of weak solutions by means of Galerkin approximation. Finally, an optimal control problem is considered and solved.
\end{abstract}

\maketitle \setlength{\textheight}{19.5 cm}
\setlength{\textwidth}{12.5 cm}
\newtheorem{theorem}{Theorem}[section]
\newtheorem{lemma}[theorem]{Lemma}
\newtheorem{proposition}[theorem]{Proposition}
\newtheorem{corollary}[theorem]{Corollary}
\newtheorem{Hypo}[theorem]{Assumption}
\theoremstyle{definition}
\newtheorem{definition}[theorem]{Definition}
\newtheorem{assumptions}[theorem]{Assumptions}
\newtheorem{example}[theorem]{Example}
\theoremstyle{remark}
\newtheorem{remark}[theorem]{Remark}
\numberwithin{equation}{section} \setcounter{page}{1}

\newcommand{\mbbx}{\mathbb X}
\newcommand{\me}{\mathscr E}
\newcommand{\X}{\mathbb X}
\newcommand{\dfc}{\mathbb D_t^\alpha}

\section{Introduction}

Hale and Raugel \cite{HR92a, HR92b} studied 3d nonlinear equations in thin domains of the form  $ \Omega_\varepsilon=\Omega\times(0,\varepsilon)$,  where $\Omega\subset \mathbb R^2$ and $0<\varepsilon<1$. Afterwards Raugel and Sell \cite{RS93} studied Navier-Stokes equations in thin domains. This developments found applications in Lake equations and Shallow water equations \cite{CHL96,LOT96a,LOT96b}. J. Roh in his doctoral thesis \cite{R01}, has employed the techniques developed by Hale, Raugel and Sell to the Navier-Stokes equations on thin domains of the form $ \Omega_g=\Omega\times(0,g)$, where $g$ is some smooth scalar function. The derived equations are called the $g-$Navier-Stokes equations. The main difference between the classical Navier-Stokes equations and the $g-$Navier-Stokes equations is a weighted divergence condition of the form $\nabla.(gu)=0$. 

Much has already been done in the theory of $g-$Navier-Stokes equations. The existence of weak solutions is carried out in \cite{BR04}; see also \cite{R01, R05, QT17}. The stability and long-time behaviour questions can be found in \cite{AQ12a, AQ12b, Q14, ATT17}. Attractors of g-Navier-Stokes equations attracted most of the interest of researchers \cite{JHW11, JH09, JH10, JW13, K12, KR05, KKR06, Q15, Q16, R06, W09, W10}.

In this paper, we propose to study the weak solutions to time-fractional $g-$Navier-Stokes equations in the following form 

\begin{equation}\label{eq00}
 \left \{
   \begin{array}{r c l}
       \partial_t^\alpha u-\nu\Delta u+(u.\nabla)u+\nabla p & = & f\quad\text{ in } (0,T)\times\Omega \\
     \nabla.(gu) & = & 0 \quad\text{ in } (0,T)\times\Omega\\
     u(0,.)& = & u_0 \quad\text{ in }\Omega
   \end{array}
   \right .
\end{equation}
where where $\partial_t^\alpha$ is the Caputo fractional derivative of order $\alpha\in(0,1)$, $u = u(x, t) = (u_1, u_2)$ is the unknown velocity vector, $p = p(x, t)$ is the unknown pressure, $\nu>0$ is the kinematic viscosity coefficient, $u_0$ is the initial velocity, and $f$ represents the external force field.

The case where $\alpha=1$ and $g=1$ is just the standard Navier-Stokes equations and is a well-know subject, for more details, we refer to the monographs \cite{mah20,zaj98,Tac19,G11,L02,LK16} and references therein. The case where $\alpha=1$ and $g\neq 1$ is the $g-$Navier-Stokes equations as described above. The case where $0<\alpha<1$ and $g=1$ was considered first by Lions in \cite{L59} but for order less than $\frac{1}{4}$
provided the space dimension is not further than $4$.  In a recent work of Zhou and Peng \cite{ZP17}, the question of weak solutions and optimal control problem of time-fractional Navier-Stokes equations is considered. In the current paper we will consider the case where  $0<\alpha<1$ and $g\neq 1$. We will prove that problem \ref{eq00} has a unique weak solution in domains $\Omega\subset \mathbb R^2$ with enough smooth boundaries.

As a second step, we will consider the following control problem associated with equations \eqref{eq00}:

\begin{equation}\label{eq11}
 \left \{
   \begin{array}{r c l}
       \partial_t^\alpha u-\nu\Delta u+(u.\nabla)u+\nabla p & = & Cw+f\quad\text{ in } (0,T)\times\Omega \\
     \nabla.(gu) & = & 0 \quad\text{ in } (0,T)\times\Omega\\
     u(0,.)& = & u_0 \quad\text{ in }\Omega
   \end{array}
   \right .
\end{equation}
where $w:[0,T]\rightarrow U_g$ and $C:U_g\rightarrow (L_g^2(\Omega))^2$, with $U_g$ is some Hilbert space and $L_g^2(\Omega)$ is the standard Sobolev space with weight $g$.

This paper is organized as follows: In the first section, we will recall some concept and notations related to fractional calculus. Section 2, is devoted to the problem statement while section 3 will be dedicated to the proof of the existence and uniqueness of weak solutions to time-fractional g-Navier-Stokes equations. Finally section 4 will be concerned with the existence of an optimal control to \eqref{eq11}.

\section{Preliminaries}

In this section, we provide some notations and preliminary results concerning fractional calculus. For this purpose, assume $X$ to be a Banach space. Let $\alpha\in (0,1]$ and let $k_\alpha$ denote the Riemann-Liouville kernel
\[
k_\alpha(t)=\frac{t^{\alpha-1}}{\Gamma(\alpha)}
\]

For a function $v:[0,T]\rightarrow X$, we give the following definitions of derivatives and integrals:
\begin{itemize}
\item[(1)] The left Riemann-Liouville integral of $v$ is defined by
\[
_0I_t^\alpha v(t)=\int_0^t k_\alpha(t-s)v(s)\,ds,\quad t>0
\]provided the integral is point-wise defined on $[0,+\infty[$
\item[(2)] The right Riemann-Liouville integral of $v$ is defined by
\[
\, _tI_T^\alpha v(t)=\int_t^T k_\alpha(t-s)v(s)\,ds,\quad t>0
\]provided the integral is point-wise defined on $[0,+\infty[$
\item[(3)]  The left Caputo fractional derivative of order $\alpha$ of $v$, is defined by
\[
\,_0^CD_t^\alpha v(t)= \int_0^t k_{1-\alpha}(t-s)\frac{d}{ds}v(s)\,ds
\]
\item[(4)] The right Riemann-Liouville fractional derivative of order $\alpha$ of $v$ is defined by
\[
\,_tD_T^\alpha v(t)=-\frac{d}{dt} \int_t^T k_{1-\alpha}(t-s)v(s)\,ds
\]
\item[(5)] The Liouville-Weyl fractional integral on the real axis for functions $v:\mathbb R\rightarrow X$ is defined as follows
\[
\,_{-\infty}I_t^\alpha v(t)=\int_{-\infty}^t k_{\alpha}(t-s)v(s)\,ds.
\]
\item[(6)] The Caputo fractional derivative on the real axis for functions $v:\mathbb R\rightarrow X$ is defined as follows
\[
\,_{-\infty}^CD_t^\alpha v(t)= \,_{-\infty}I_t^{1-\alpha}\frac{d}{dt} v(t).
\]
\end{itemize}

Note that the notation $\partial_t^\alpha$ stands for Caputo fractional partial derivative, i.e. when functions have another argument than time. We have the following fractional integration by parts formula; see, e.g. \cite{A07}
\begin{align*}
\int_0^T(\partial_t^\alpha u(t),\psi(t))dt&=\int_0^T(u(t),_tD_T^\alpha\psi(t))dt+(u(t),_tI_T^{1-\alpha}\psi(t))|_0^T\\
&=\int_0^T(u(t),_tD_T^\alpha\psi(t))dt-(u(0),_0I_T^{1-\alpha}\psi(t))
\end{align*}since for $\psi\in C_0^\infty([0,T],X)$ one have $\displaystyle\lim_{t\to T}\,_tI_T^{1-\alpha}\psi(t)=0$.

To pass from weak convergence to strong convergence we will need a compactness result. Let $X_0$, $X$, $X_1$ be Hilbert spaces with $ X_0 \xhookrightarrow{} X \xhookrightarrow{} X_1$ being continuous and $X_0\xhookrightarrow{} X$ being compact. Assume that $v:\mathbb R\rightarrow X_1$ and denote by $\widehat v$ its Fourier transform
\[
\widehat v(\tau)=\int_{-\infty}^{+\infty}e^{-2i\pi t \tau}v(t)\,dt.
\]
We have for $\gamma>0$
\[
\widehat{_{-\infty}^CD_t^\gamma v}(\tau)=(2i\pi\tau)^\gamma \widehat v(\tau).
\]
For a given $0<\gamma<1$, we introduce the following space
\[
W^\gamma(\mathbb R,X_0,X_1)=\left\{v\in L^2(\mathbb R,X_0):\, _{-\infty}^CD_t^\gamma v\in L^2(\mathbb R,X_1)\right\}.
\]
Clearly, it is a Hilbert space for the norm
\[
\|v\|_\gamma=\left(\|v\|^2_{L^2(\mathbb R,X_0)}+\|\,|\tau|^\gamma\widehat v\|^2_{L^2(\mathbb R,X_1)}\right)^{1/2}.
\]
 For any set $K\subset \mathbb R$, we associate with it the subspace $W_K^\gamma\subset W^\gamma$ defined as
 \[
 W_K^\gamma(\mathbb R,X_0,X_1)=\{v\in W^\gamma(\mathbb R,X_0,X_1):\,\mathrm{support} u\subset K\}.
 \]

By similar discussion as in the proof of Theorem 2.2 in Temam \cite{T84}(see also Theorem 2.1 in \cite{ZP17}), it is clear that $W_K^\gamma(\mathbb R,X_0,X_1)\xhookrightarrow{} L^2(\mathbb R,X)$ is compact for any bounded set $K$ and any $\gamma>0$.

As a particular situation of the compactness result discussed above, let $H$, $V$ be two Hilbert spaces endowed with the scalar product $(.,.)_H$ and $(.,.)_V$ and the norms $|.|_H$ and $\|.\|_V$, respectively. Denote by $\langle.,.\rangle$ the dual pairing between $V$ and $V'$, the dual of $V$. Moreover assume that $V \xhookrightarrow{} H  \xhookrightarrow{} V'$ continuously and compactly and note that the space
\[
W^\gamma(0,T;V,V')=\left\{v\in L^2(0,T;V):\, \partial_t^\gamma v\in L^2(0,T;V')\right\}
\]is compactly embedded in $L^2(0,T;H)$. Similarly to Lemma 2.1 in \cite{ZP17}, we have
\[
\partial_t^\gamma(u(t),v)_V=\langle\partial_t^\gamma u(t),v\rangle
\] for $u\in W^\gamma(0,T;V,V')$ and $v\in H$. Moreover, for a differentiable function $v:[0,T]\rightarrow V$ we have from \cite{A10} that

\[
(v(t),_{~0}^{~C}D_t^\gamma v(t))_{H}\geq\frac{1}{2}\,_0^CD_t^\gamma|v(t)|^2.
\]

We end this section by the following important result
\begin{lemma}

Suppose that a nonnegative function satisfies
\[
_{~0}^{~C}D_t^\gamma v(t)+ c_1 v(t)\leq c_2(t)
\]for $c_1>0$ and $c_2$ a nonnegative integrable function for $t\in[0,T]$. Then 
\[
v(t)\leq v(0)+\frac{1}{\Gamma(\gamma)}\int_0^t(t-s)^{\gamma-1} v(s)\,ds.
\]

\end{lemma}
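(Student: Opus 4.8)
The plan is to convert the fractional differential inequality into an integral one by applying the left Riemann--Liouville integral $\,_0I_t^\gamma$ to both sides, and then to read off the asserted estimate using the positivity of the kernel together with the sign hypotheses $v\ge0$, $c_1>0$.

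The key tool is the inversion relation between the two fractional operators. Writing the Caputo derivative as $\,_0^CD_t^\gamma v=\,_0I_t^{1-\gamma}\tfrac{d}{dt}v$, as in the preliminaries, and using the semigroup identity $\,_0I_t^\gamma\,_0I_t^{1-\gamma}=\,_0I_t^{1}$ for Riemann--Liouville integrals, one obtains for $0<\gamma<1$ and absolutely continuous $v$
\[
\,_0I_t^\gamma\big(\,_0^CD_t^\gamma v\big)(t)=\,_0I_t^{1}\tfrac{d}{dt}v(t)=v(t)-v(0).
\]
This is the fractional analogue of the fundamental theorem of calculus and is what turns the hypothesis into a closed integral inequality for $v$.

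Applying $\,_0I_t^\gamma$ to $\,_0^CD_t^\gamma v(t)+c_1v(t)\le c_2(t)$ and using that the kernel $k_\gamma(t-s)=(t-s)^{\gamma-1}/\Gamma(\gamma)$ is nonnegative on $0<s<t$ (so $\,_0I_t^\gamma$ preserves the direction of inequalities), the identity above gives
\[
v(t)-v(0)+c_1\,_0I_t^\gamma v(t)\le \,_0I_t^\gamma c_2(t).
\]
Since $c_1>0$ and $v\ge0$ force $c_1\,_0I_t^\gamma v(t)\ge0$, this already produces an estimate of the form $v(t)\le v(0)+\,_0I_t^\gamma(\cdot)$, and the target is to bring it to the asserted
\[
v(t)\le v(0)+\frac{1}{\Gamma(\gamma)}\int_0^t(t-s)^{\gamma-1}v(s)\,ds .
\]

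I expect the matching of the integrand to be the main obstacle. The fractional integration produces the forcing integral $\,_0I_t^\gamma c_2$ together with the dissipative contribution $-c_1\,_0I_t^\gamma v$, and to land on the integrand $v(s)$ displayed in the statement one must dominate the forcing integral by $\,_0I_t^\gamma v$. In the a priori energy estimates to which this lemma is applied, the forcing $c_2$ is itself controlled by (a multiple of) the solution norm represented by $v$, so I would use that structure to absorb $\,_0I_t^\gamma c_2$ into $\,_0I_t^\gamma v$; keeping careful track of the two contributions is where the real work lies. A secondary technical point is to justify the inversion identity under the weak regularity available for $v$, which I would handle by first establishing it for smooth $v$ and passing to the limit. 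Once the estimate is in the displayed integral form, one is in position to invoke the standard fractional Gr\"onwall inequality to extract a pointwise-in-time bound on $v$.
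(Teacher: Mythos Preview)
The paper does not prove this lemma; it is quoted as a preliminary result and then used. More importantly, the displayed conclusion contains a typographical error: the integrand on the right should be $c_2(s)$, not $v(s)$. This is confirmed by every subsequent application in the paper (e.g.\ the estimate $|\xi(t)|^2\le |\xi(0)|^2+\frac{1}{\nu'\lambda_1\Gamma(\alpha)}\int_0^t(t-s)^{\alpha-1}|\eta(s)|^2\,ds$, where $|\eta|^2$ plays the role of $c_2$, not of $v$). Your derivation is correct exactly up to the point where you obtain
\[
v(t)-v(0)+c_1\,{}_0I_t^\gamma v(t)\le {}_0I_t^\gamma c_2(t),
\]
and, after dropping the nonnegative term $c_1\,{}_0I_t^\gamma v$, the bound $v(t)\le v(0)+\frac{1}{\Gamma(\gamma)}\int_0^t(t-s)^{\gamma-1}c_2(s)\,ds$. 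That is the intended lemma and the argument is complete there; no further Gr\"onwall step is needed.

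The gap in your write-up is the attempt to force the conclusion into the literally printed form with $v(s)$ in the integrand. Your proposed mechanism, ``the forcing $c_2$ is itself controlled by (a multiple of) the solution norm represented by $v$,'' is not among the hypotheses of the lemma and cannot be supplied from them. In fact the printed inequality is false in general: take $\gamma\in(0,1)$, $c_1=1$, $v(t)=1+M\,t^\gamma/\Gamma(\gamma+1)$ with $M>1$, and $c_2(t)={}_0^CD_t^\gamma v(t)+v(t)=M+v(t)\ge0$. Then $v(0)+{}_0I_t^\gamma v(t)=1+\frac{t^\gamma}{\Gamma(\gamma+1)}+M\,\frac{t^{2\gamma}}{\Gamma(2\gamma+1)}$, which is strictly smaller than $v(t)=1+M\,\frac{t^\gamma}{\Gamma(\gamma+1)}$ for all sufficiently small $t>0$. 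So the step you flagged as ``the main obstacle'' is not an obstacle to be overcome but a symptom that the target was mis-stated; stop at the $c_2$ version and note the typo.
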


For more details about fractional calculus we refer to the monographs \cite{KST06,Z14,Z16}.

\section{Problem Statement }

Let $\Omega$ be a bounded domain of $\mathbb R^2$ with smooth boundary. Let $g\in W^{1,\infty}(\Omega)$ such that $0<m_0<g(x)<M_0$ for all $x\in\Omega$. To deal with the weighted divergence condition $\nabla. gu=0$ we rewrite the Navier-Stokes equations in weighted Sobolev spaces. This bring no further difficulties thanks to the definition of $g$.

Let $L_g^2(\Omega)=\left(L^2(\Omega)\right)^2$ and $H_g^1(\Omega)=(H^1(\Omega))^2$ be endowed, respectively, with the inner products
\[
(u,v)_g=\int_\Omega u.vg\,dx,\quad u,\,v\in L_g^2(\Omega)
\]
\[
((u,v))_g=\int_\Omega\sum_{j=1}^2\nabla u_j.\nabla v_j g\,dx,\quad u,\,v\in H_g^1(\Omega)
\]and norms $|.|_g^2=(.,.)_g$, $\|.\|^2_g=((.,.))_g$. Note that the norms $|.|_g$ and $\|.\|_g$ are equivalent to the usual norms in $(L^2(\Omega))^2$ and $(H^1(\Omega))^2$.

Let us introduce the following spaces
\begin{align*}
V&=\{u\in (C_0^\infty(\Omega))^2:\,\nabla.(gu)=0\,\text{ in } \Omega\},\\
H_g&=\text{ the closure of } V \text{ in } L^2_g(\Omega),\\
V_g&=\text{ the closure of } V \text{ in } H^1_g(\Omega).
\end{align*}

It follows that
\[V_g\subset H_g\simeq H_g'\subset V_g';
\] where the injections are dense and continuous. We denote by  $\|.\|_*$ the norm in $V_g'$ and by $\langle ., .\rangle$ the duality pairing between $V_g$ and its dual $V_g'$.

Set $A_g:V_g\rightarrow V_g'$ defined by $\langle A_gu,v\rangle=((u,v))_g$. Denote $D(A_g)=\{u\in V_g: A_gu\in H_g\}$. Then $D(A_g)=H^2_g(\Omega)\cap V_g$ and $A_gu=P_g\,\Delta u$ for all $u\in D(A_g)$ where $P_g$ is the orthogonal projection from $L^2_g(\Omega)$ onto $H_g$.

Set $B_g:V_g\times V_g\rightarrow V_g'$ defined by $\langle B(u,v),w\rangle=b_g(u,v,w)$, where 
\[
b_g(u,v,w)=\sum_{j,k=1}^d\int_\Omega u_j\frac{\partial v_k}{\partial x_j}w_kgdx
\]whenever the integral make sense. It is easy to check that if $u,v,w\in V_g$ then $b_g(u,v,w)=-b_g(u,w,v)$. Hence
\[
b_g(u,v,v)=0,\quad \forall u,v\in V_g.
\]
Let $u\in L^2(0,T;V_g)$, then $C_gu$ defined by
\[
(C_gu(t),v)_g=((\frac{\nabla g}{g}.\nabla)u,v)_g=b_g(\frac{\nabla g}{g},u,v),\quad \forall v\in V_g.
\]
Since
\[
-\frac{1}{g}(\nabla.g\nabla)u=-\Delta u-(\frac{\nabla g}{g}.\nabla)u
\]we have
\begin{align*}
(-\Delta u,v)_g&=((u,v))_g+((\frac{\nabla g}{g}.\nabla)u,v)_g\\
&=(A_gu,v)_g+((\frac{\nabla g}{g}.\nabla)u,v)_g\quad \forall u,v\in V_g.
\end{align*}

\begin{proposition}[\cite{R01}]\label{gstokes}
For the $g-$Stokes operator $A_g$, the following hold:
\begin{itemize}
\item[(1)] The $g-$Stokes operator $A_g$ is positive, self-adjoint with compact inverse, where the domain of $A_g$ is $D(A_g)=V_g\cap H^2_g(\Omega)$.
\item[(2)] There exists countably many eigenvalues of $A_g$ satisfying 
\begin{equation*}
0<\frac{4\pi^2m_0}{M_0}\leq \lambda_1\leq\lambda_2\leq\dots
\end{equation*}
where $\lambda_1$ is the smallest eigenvalue of $A_g$. In addition, there exists the corresponding collection of eigenfunctions $\{u_i\}_{i\in\mathbb N}$ forming an orthonormal basis for $H_g$.
\end{itemize}
\end{proposition}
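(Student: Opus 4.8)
The plan is to reduce everything to the analogous statements for the classical Stokes operator, transported through the weight $g$. The starting point is the identity derived just above the proposition,
\[
(-\Delta u,v)_g=(A_g u,v)_g+\Bigl(\bigl(\tfrac{\nabla g}{g}.\nabla\bigr)u,v\Bigr)_g\qquad\forall u,v\in V_g,
\]
together with the fact (from the bounds $0<m_0<g<M_0$ and $g\in W^{1,\infty}$) that the weighted inner products $(.,.)_g$ and $((.,.))_g$ are equivalent to the standard ones on $(L^2(\Omega))^2$ and $(H^1(\Omega))^2$. Consequently $H_g$ and $V_g$ coincide as sets with the usual divergence-free spaces built from $V$ (the equivalence of norms makes the closures the same), and the embedding $V_g\hookrightarrow H_g$ inherits compactness from the Rellich--Kondrachov theorem on the bounded smooth domain $\Omega$.

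For part (1): self-adjointness and positivity of $A_g$ follow directly from its definition $\langle A_g u,v\rangle=((u,v))_g$, since this is a symmetric, coercive bilinear form on $V_g$ (coercivity on $V_g$ because $\|.\|_g$ controls $|.|_g$ up to a constant, via the weighted Poincar\'e inequality that also underlies the eigenvalue bound in part (2)). Lax--Milgram then gives that $A_g\colon V_g\to V_g'$ is an isomorphism; restricting to $D(A_g)=\{u\in V_g:A_gu\in H_g\}$ yields a positive self-adjoint operator on $H_g$ with bounded inverse $A_g^{-1}\colon H_g\to V_g$. Composing $A_g^{-1}$ with the compact embedding $V_g\hookrightarrow H_g$ shows $A_g^{-1}$ is compact on $H_g$. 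The identification $D(A_g)=V_g\cap H_g^2(\Omega)$ and the formula $A_g u=P_g\Delta u$ are the elliptic-regularity step: one notes that $A_g u=h\in H_g$ means $u$ solves, weakly, a Stokes-type system with the lower-order term $(\tfrac{\nabla g}{g}.\nabla)u$, and since $\nabla g/g\in L^\infty$ this is a bounded perturbation, so the standard $H^2$ regularity theory for the Stokes problem on a smooth bounded $\Omega\subset\mathbb R^2$ applies (this is exactly Roh's argument in \cite{R01}, which we may cite).

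For part (2): once $A_g$ is positive self-adjoint on the separable Hilbert space $H_g$ with compact inverse, the spectral theorem for compact self-adjoint operators gives a countable family of positive eigenvalues $0<\lambda_1\le\lambda_2\le\cdots\to\infty$ and an orthonormal basis $\{u_i\}$ of $H_g$ of eigenfunctions. The quantitative lower bound $\lambda_1\ge 4\pi^2 m_0/M_0$ comes from the variational characterization $\lambda_1=\inf_{u\in V_g\setminus\{0\}}\|u\|_g^2/|u|_g^2$ combined with the weighted Poincar\'e inequality: $\|u\|_g^2=\int_\Omega|\nabla u|^2 g\ge m_0\int_\Omega|\nabla u|^2\ge m_0\,\lambda_1^{\mathrm{cl}}\int_\Omega|u|^2\ge (m_0/M_0)\lambda_1^{\mathrm{cl}}\int_\Omega|u|^2 g$, where $\lambda_1^{\mathrm{cl}}$ is the first eigenvalue of the classical Stokes operator on $\Omega$; invoking the standard bound $\lambda_1^{\mathrm{cl}}\ge 4\pi^2$ (for the relevant normalization of $\Omega$) gives the claim.

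The only genuinely non-formal step is the elliptic regularity identity $D(A_g)=V_g\cap H_g^2(\Omega)$ with $A_g u=P_g\Delta u$; everything else is abstract spectral theory plus the norm-equivalence bookkeeping. Since the proposition is attributed to \cite{R01}, the cleanest route is to carry out the reduction to the classical Stokes operator as above and cite Roh for the regularity statement rather than reproving it, so the main obstacle is essentially already resolved in the literature.
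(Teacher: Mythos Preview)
Your outline is sound, but note that the paper does not actually prove this proposition: it is stated with attribution to \cite{R01} and no argument is given in the text. So there is no ``paper's own proof'' to compare against; the authors simply import the result from Roh's thesis.

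Your sketch is essentially the standard one (and presumably close to what Roh does): symmetry and coercivity of $((\,\cdot\,,\,\cdot\,))_g$ on $V_g$ give positivity and self-adjointness via Lax--Milgram, compactness of $A_g^{-1}$ follows from the compact embedding $V_g\hookrightarrow H_g$, the spectral theorem for compact self-adjoint operators yields the eigenbasis, and elliptic regularity for the Stokes system (with the bounded lower-order perturbation $(\nabla g/g)\cdot\nabla$) identifies $D(A_g)$. The one point where your argument is genuinely incomplete is the explicit constant $4\pi^2$: the inequality $\lambda_1^{\mathrm{cl}}\ge 4\pi^2$ is not true for an arbitrary bounded smooth domain $\Omega\subset\mathbb R^2$, so ``for the relevant normalization of $\Omega$'' is hiding a real assumption (in Roh's setting this comes from working on a specific square or torus, or after rescaling). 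If you intend to supply a self-contained proof rather than cite \cite{R01}, you should either state that normalization explicitly or replace $4\pi^2$ by the first Dirichlet/Stokes eigenvalue of $\Omega$ and carry the constant symbolically.
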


Since the operators $A_g$ and $P_g$ are self-adjoint, using integration by parts, we have 
\[
(A_gu,u)_g=(P_g(-\Delta u),u)_g=\int_\Omega \nabla u.\nabla u gdx=|\nabla u|_g^2
\]
and $b_g$ satisfies
\[
|b_g(u,v,w)|_g\leq c|u|_g^{\frac{1}{2}}\|u\|_g^{\frac{1}{2}}|v|_g^{\frac{1}{2}}|w|_g^{\frac{1}{2}}\|w\|_g^{\frac{1}{2}},\quad\forall u,v,w\in V_g.
\]

Denote the operator $C_gu=P_g\left(\frac{1}{g}(\nabla g.\nabla)u\right)$ such that $(C_gu,v)_g=b_g(\frac{\nabla g}{g},u,v)$ and the operator $B_g[.]=B(.,.)$.

\begin{definition}
Let $f\in L^{2/\alpha_1}(0,T; V_g')$ for some $\alpha_1\in(0,\alpha)$ and $u_0\in H_g$ be given. A function $u\in L^2(0,T;V_g)$ with $u_t\in L^2(0,T; V_g')$ is called a weak solution of problem \eqref{eq00} if it fulfils 

\begin{equation}\label{eqdef}
 \left \{
   \begin{array}{r c l}
       \partial_t^\alpha u+\nu A_g u+\nu C_g u +B_g[u]& = & f\quad\text{ in } L^2(0,T; V_g') \\
     u(0)&=&u_0,\quad \text{  in } H_g. 
   \end{array}
   \right .
\end{equation}
\end{definition}

\section{Existence and Uniqueness}

In this section we prove the existence and uniqueness of the weak solution of problem \eqref{eqdef} under the following hypothesis:
\begin{equation*}
H(g):\quad |\nabla g|_{\infty} < \frac{1}{2}m_0\lambda_{1}^{\frac{1}{2}}
\end{equation*}where $\lambda_1$ is defined in Proposition \ref{gstokes}.
We will apply Faedo-Galerkin Method as initially used in \cite{T84} for classical Navier-Sokes equations and in \cite{Z16} for time-fractional version of it.

\begin{theorem}
Let $\Omega$ be a bounded and locally Lipschitz domain in $\mathbb R^2$. Let $f\in L^2(0,T;V_g')$ and $u_0\in H_g$. Then, under the hypothesis $H(g)$, the equation \eqref{eqdef} has  a unique weak solution $u\in L^2(0,T;V_g)\cap L^\infty(0,T; H_g)$.
\end{theorem}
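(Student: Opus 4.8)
The plan is to use the Faedo--Galerkin method with the special basis $\{u_j\}_{j\ge1}$ of $H_g$ formed by the eigenfunctions of $A_g$ provided by Proposition~\ref{gstokes}. Set $V_m=\mathrm{span}\{u_1,\dots,u_m\}$, let $P_m$ be the $H_g$-orthogonal projection onto $V_m$, and look for $u_m(t)=\sum_{j=1}^m c_{jm}(t)u_j$ solving
\[
(\partial_t^\alpha u_m,u_j)_g+\nu(A_gu_m,u_j)_g+\nu(C_gu_m,u_j)_g+b_g(u_m,u_m,u_j)=\langle f,u_j\rangle,\qquad 1\le j\le m,
\]
with $u_m(0)=P_mu_0$. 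This is a system of Caputo fractional ODEs in the $c_{jm}$; local solvability follows from a fixed-point argument for fractional ODEs, and the a priori bounds below extend the solution to all of $[0,T]$.

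The first real step is the energy estimate. Testing the Galerkin system with $u_m$ (multiplying by $c_{jm}$ and summing), using $b_g(u_m,u_m,u_m)=0$, the identity $(A_gu_m,u_m)_g=\|u_m\|_g^2$, and the inequality $(u_m,{}_0^CD_t^\alpha u_m)_g\ge\tfrac12\,{}_0^CD_t^\alpha|u_m|_g^2$ from \cite{A10}, one arrives at
\[
\tfrac12\,{}_0^CD_t^\alpha|u_m|_g^2+\nu\|u_m\|_g^2\le\nu\,\bigl|b_g\bigl(\tfrac{\nabla g}{g},u_m,u_m\bigr)\bigr|+\|f\|_*\,\|u_m\|_g .
\]
Estimating the integral defining $b_g(\tfrac{\nabla g}{g},u_m,u_m)$ directly, using $0<m_0<g$ and the Poincaré inequality $\lambda_1|u_m|_g^2\le\|u_m\|_g^2$, gives $|b_g(\tfrac{\nabla g}{g},u_m,u_m)|\le\tfrac{|\nabla g|_\infty}{m_0\lambda_1^{1/2}}\|u_m\|_g^2$, so hypothesis $H(g)$ makes the $C_g$-term at most $\tfrac{\nu}{2}\|u_m\|_g^2$ and it is absorbed on the left; a Young inequality on $\|f\|_*\|u_m\|_g$ then yields
\[
\tfrac12\,{}_0^CD_t^\alpha|u_m|_g^2+\tfrac{\nu}{4}\|u_m\|_g^2\le\tfrac1\nu\|f\|_*^2 .
\]
Bounding $\|u_m\|_g^2$ below by $\lambda_1|u_m|_g^2$ and invoking the fractional Gronwall Lemma of Section~2 bounds $(u_m)$ in $L^\infty(0,T;H_g)$ (here one uses that $\|f\|_*^2$ is integrable enough for the resulting Riemann--Liouville integral on the right to be a bounded function of $t$, which is exactly what the integrability assumption on $f$ in the definition of weak solution furnishes); integrating the displayed inequality over $[0,T]$ together with the fractional integration-by-parts formula bounds $(u_m)$ in $L^2(0,T;V_g)$. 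Since $\Omega\subset\mathbb R^2$, the Ladyzhenskaya estimate gives $\|B_g[u_m]\|_*\le c\,|u_m|_g\,\|u_m\|_g$, and since $A_g$ and $C_g$ map $V_g$ boundedly into $V_g'$, the equation shows $(\partial_t^\alpha u_m)$ is bounded in $L^2(0,T;V_g')$; hence $(u_m)$ is bounded in $W^\alpha(0,T;V_g,V_g')$.

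Next I would pass to the limit. Along a subsequence $u_m\rightharpoonup u$ in $L^2(0,T;V_g)$ and weakly-$*$ in $L^\infty(0,T;H_g)$, $\partial_t^\alpha u_m\rightharpoonup\partial_t^\alpha u$ in $L^2(0,T;V_g')$, and — by the compact embedding $W^\alpha(0,T;V_g,V_g')\hookrightarrow L^2(0,T;H_g)$ recalled in Section~2 — $u_m\to u$ strongly in $L^2(0,T;H_g)$. The strong $L^2(0,T;H_g)$ convergence combined with the weak $L^2(0,T;V_g)$ convergence lets one pass to the limit in $\int_0^T b_g(u_m,u_m,w)\,dt$ for $w$ in a dense subset of $L^2(0,T;V_g)$, so the limit satisfies the equation of \eqref{eqdef} in $L^2(0,T;V_g')$; the linear terms pass by weak convergence. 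The initial condition $u(0)=u_0$ is then recovered by testing the Galerkin and limit equations against $\psi(t)=\phi(t)v$ with $v\in V_g$ and $\phi\in C^1([0,T])$, $\phi(T)=0$, applying the fractional integration-by-parts formula, and using $P_mu_0\to u_0$ in $H_g$. I expect this limit passage — in particular the interaction of the nonlocal fractional derivative with the nonlinear term and the extraction of the initial trace — to be the main technical point, handled along the lines of Temam's Theorem~2.2 \cite{T84} and \cite[Theorem~2.1]{ZP17}.

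Finally, for uniqueness, if $u,\bar u$ are two weak solutions and $w=u-\bar u$ (so $w(0)=0$), subtract the equations and test with $w$: using $b_g(\bar u,w,w)=0$, the identity $b_g(u,u,w)-b_g(\bar u,\bar u,w)=b_g(w,u,w)$, the two-dimensional bound $|b_g(w,u,w)|\le c\,|w|_g\,\|w\|_g\,\|u\|_g$, Young's inequality, and once again the $H(g)$ absorption of the $C_g$-term, one obtains
\[
{}_0^CD_t^\alpha|w|_g^2\le\frac{c}{\nu}\,\|u(t)\|_g^2\,|w|_g^2 .
\]
Since $\|u\|_g^2\in L^1(0,T)$ and $w(0)=0$, the fractional Gronwall Lemma (in its form with a singular kernel) forces $|w|_g^2\equiv0$, i.e. $u=\bar u$.
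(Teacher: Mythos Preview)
Your proposal is correct and follows the same Faedo--Galerkin architecture as the paper: the eigenfunction basis, the $H(g)$-based absorption of the $C_g$-term, the energy bounds in $L^\infty(0,T;H_g)\cap L^2(0,T;V_g)$, the limit passage via strong $L^2(0,T;H_g)$ convergence, the recovery of $u(0)=u_0$ by testing against $\psi(t)v$ with $\psi(T)=0$, and the uniqueness via a fractional Gronwall argument all appear in both.

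The one genuine difference is the route to compactness. You bound $\partial_t^\alpha u_m$ directly in $L^2(0,T;V_g')$ from the Galerkin equation (using that, with the eigenfunction basis, $P_m$ is a contraction on $V_g$ and the 2D estimate $\|B_g[u_m]\|_*\le c\,|u_m|_g\|u_m\|_g$), and then invoke the compact embedding $W^\alpha(0,T;V_g,V_g')\hookrightarrow L^2(0,T;H_g)$ stated in the preliminaries. The paper instead extends $u^{(m)}$ by zero to $\mathbb R$, computes the distributional Caputo derivative (picking up $\,_{-\infty}I_t^{1-\alpha}\delta_0$ and $\,_{-\infty}I_t^{1-\alpha}\delta_T$ corrections), takes Fourier transforms, and shows only that $\tilde u^{(m)}$ is bounded in $W^\gamma(\mathbb R,V_g,H_g)$ for some $\gamma<\alpha/4$; strong $L^2(0,T;H_g)$ convergence then comes from the compactness of $W^\gamma_K\hookrightarrow L^2$. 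Your route is shorter and yields the full order $\alpha$ of time regularity (in $V_g'$), whereas the paper's Fourier computation, inherited from Temam \cite{T84} and \cite{ZP17}, is more self-contained in that it essentially \emph{proves} the compactness it needs rather than citing it. Two minor points to tighten in your write-up: (i) the $L^2(0,T;V_g)$ bound is obtained by applying ${}_0I_t^\alpha$ to the energy inequality (so that ${}_0I_t^\alpha\!\circ\partial_t^\alpha|u_m|_g^2=|u_m|_g^2-|u_m(0)|_g^2$ and one uses $(t-s)^{\alpha-1}\ge T^{\alpha-1}$), not by a plain integration over $[0,T]$; (ii) the assertion $\partial_t^\alpha u_m\rightharpoonup\partial_t^\alpha u$ in $L^2(0,T;V_g')$ requires the side information $u_m(0)=P_mu_0\to u_0$ in $H_g$, since the Caputo derivative depends on the initial trace---but once strong $L^2(0,T;H_g)$ convergence is available, the limit in the weak formulation can be taken exactly as the paper does, without ever invoking this weak convergence explicitly.
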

\begin{proof}
We apply Faedo-Galerkin Method. Since $V_g$ is separable, there exists a sequence $\{u_i\}_{i\in\mathbb N}$ which forms a complete orthonormal system in $H_g$ and a basis for $V_g$.
Let $m$ be a positive integer. For each $m$, we define an approximate solution $u^{(m)}(t)$ of \eqref{eqdef} as a solution of the system
\begin{equation}\label{eqp1}
 \left \{
   \begin{array}{r r r}
       (_0^C\mathrm D_t^\alpha u^{(m)},u_k)_g+\nu((u^{(m)},u_k))_g+\nu b_g(\frac{\nabla g}{g},u^{(m)},u_k)+b_g(u^{(m)},u^{(m)},u_k)=(f,u_k)_g,\\
(u^{(m)}(0),u_k)_g=(u_0,u_k)_g.
   \end{array}
   \right .
\end{equation}

Let $\xi_k=\xi_k(t)$ denote the $k$th component of $u^{(m)}(t)$, i.e. $\xi_k(t)=(u^{(m)}(t),u_k)_g$. Also, let $\eta_k(t)=(f(t),u_k)_g$ be component of $f(t)$. Then \eqref{eqp1} is equivalent to a nonlinear fractional order ordinary differential equation for the functions $\xi_k$

\begin{equation}\label{eqp2}
 \left \{
   \begin{array}{r c l}
       _0^C\mathrm D_t^\alpha \xi_k+\nu\lambda_k\xi_k+\nu\displaystyle\sum_{l=1}^m b_g(\frac{\nabla g}{g},u_l,u_k)\xi_l\\
 +\displaystyle\sum_{l,l'=1}^m b_g(u_l,u_{l'},u_k)\xi_l\xi_{l'}=\eta_k&,\,k=1,\dots,m\\
\xi_k(0)=\xi_k^0.
   \end{array}
   \right .
\end{equation}

The system forms a nonlinear first order system of fractional ordinary differential equation for the functions $\xi_k(t)$ and has maximal solutions on some interval $[0,t_m]$. If $t_m<T$, then $|\xi(t)|$ must go to $+\infty$ as $t\to t_m$, where $\xi=(\xi_1,\xi_2,\dots,\xi_n)$ and $|.|$ the euclidean norm in $\mathbb R^n$. We denote $\eta=(\eta_1,\eta_2,\dots,\eta_n)$ and $(.,.)$ the usual euclidean scalar product in $\mathbb R^n$, one obtains, from \eqref{eqp2}, that 
\begin{equation*}
(_0^C\mathrm D_t^\alpha \xi(t),\xi(t))+\nu\displaystyle\sum_{k=1}^m\lambda_k\xi^2_k+\nu\displaystyle\sum_{k,l=1}^m b_g(\frac{\nabla g}{g},u_l,u_k)\xi_l\xi_k=(\eta,\xi)
\end{equation*}
because $\displaystyle\sum_{k,l,l'=1}^m b_g(u_l,u_{l'},u_k)\xi_l\xi_{l'}\xi_k=0$. It then follows that

\begin{align*}
\frac{1}{2}\,_0^C\mathrm D_t^\alpha |\xi(t)|^2+\nu\displaystyle\sum_{k=1}^m\lambda_k\xi^2_k&\leq(\eta,\xi)-\nu\displaystyle\sum_{k,l=1}^m b_g(\frac{\nabla g}{g},u_l,u_k)\xi_l\xi_k\\
&\leq \left(\displaystyle\sum_{k=1}^m \lambda_k^{-1}\eta_k^2\right)^{\frac{1}{2}}\left(\displaystyle\sum_{k=1}^m\lambda_k\xi_k^2\right)^{\frac{1}{2}}+\frac{\nu|\nabla g|_\infty}{m_0}\displaystyle\sum_{k=1}^m|u_k||\nabla u_k|\\
&\leq \frac{1}{2\lambda_1\nu}|\eta(t)|^2+\frac{\nu}{2}\displaystyle\sum_{k=1}^m\lambda_k\xi^2_k+\frac{\nu|\nabla g|_\infty}{m_0\lambda_1^{\frac{1}{2}}}\displaystyle\sum_{k=1}^m\lambda_k\xi^2_k.
\end{align*}
It follows that 
\begin{equation*}
_0^C\mathrm D_t^\alpha |\xi(t)|^2+\nu'\displaystyle\sum_{k=1}^m\lambda_k\xi^2_k\leq \frac{1}{\lambda_1\nu}|\eta(t)|^2
\end{equation*}where $\nu'=\nu\left(1-\frac{2|\nabla g|_\infty}{m_0\lambda_1^{\frac{1}{2}}}\right)>0$ (because of $H(g)$). It follows that

\begin{equation*}
_0^C\mathrm D_t^\alpha |\xi(t)|^2+\nu'\lambda_1|\xi(t)|^2\leq \frac{1}{\lambda_1\nu}|\eta(t)|^2.
\end{equation*}
We have the estimates

\begin{align*}
|\xi(t)|^2 \leq&\, |\xi(0)|^2+\frac{1}{\nu'\lambda_1\Gamma(\alpha)}\int_0^t(t-s)^{\alpha-1}|\eta(s)|^2\,ds\\
\leq &\,|\xi(0)|^2+\frac{1}{\nu'\lambda_1\Gamma(\alpha)}\int_0^t|\eta(s)|^{2/\alpha_1}\,ds+\frac{1}{\nu'\lambda_1\Gamma(\alpha)}\int_0^t(t-s)^{\frac{\alpha-1}{1-\alpha_1}}\,ds\\
\leq&\, |\xi(0)|^2+\frac{1}{\nu'\lambda_1\Gamma(\alpha)}\int_0^t|\eta(s)|^{2/\alpha_1}\,ds+\frac{T^{1+b}}{(1+b)\nu'\lambda_1\Gamma(\alpha)}\\
\leq &\, M
\end{align*}
where $\alpha_1\in(0,\alpha)$, $b=\frac{\alpha-\alpha_1}{1-\alpha_1}$. Therefore $t_m=T$.

We multiply \eqref{eqp1} by $\xi_k(t)$ and sum this equations for $k=1,\dots,m$. Taking into account that $b_g(u^{(m)},u^{(m)},u^{(m)})=0$, we get

\begin{equation*} 
       (_0^C\mathrm D_t^\alpha u^{(m)},u^{(m)})_g+\nu\|u^{(m)}(t)\|^2_g+\nu b_g(\frac{\nabla g}{g},u^{(m)}(t),u^{(m)}(t))=(f,u^{(m)}(t))_g .
\end{equation*}
Using Schwartz and Young inequalities, we get
\begin{align*}
\frac{1}{2}\,_0^C\mathrm D_t^\alpha |u^{(m)}(t)|^2+\nu\|u^{(m)}(t)\|_g^2\leq&\, \frac{1}{2\nu}\|f(t)\|_{V_g'}^2+\frac{\nu}{2}\|u^{(m)}(t)\|^2_g+\frac{\nu|\nabla g|^2_\infty}{m_0^2}|u^{(m)}(t)|_g^2\\
\leq &\, \frac{1}{2\nu}\|f(t)\|_{V_g'}^2+\frac{\nu}{2}\|u^{(m)}(t)\|^2_g+\frac{\nu|\nabla g|^2_\infty}{\lambda_1m_0^2}\|u^{(m)}(t)\|_g^2.
\end{align*}
It follows that
\begin{equation*}
_0^C\mathrm D_t^\alpha |u^{(m)}(t)|_g^2+\nu'\|u^{(m)}(t)\|_g^2\leq\, \frac{1}{\nu}\|f(t)\|_{V_g'}^2
\end{equation*}where $\nu'=\nu\left(1-\frac{2|\nabla g|^2_\infty}{\lambda_1m_0^2}\right)$. Integrating (with order $\alpha$) we obtain

\begin{align*}
|u^{(m)}(t)|_g^2 +\nu'\int_0^t (t-s)^{\alpha-1}\|u^{(m)}(s)\|_g^2\, ds\leq&\, |u_{0m}|_g^2+\frac{1}{\nu'}\int_0^t(t-s)^{\alpha-1}\| f(s)\|_{V_g'}^2\,ds\\
\leq &\,|u_{0m}|_g^2+\frac{1}{\nu'}\int_0^t\|f(s)\|_{V_g'}^{2/\alpha_1}\,ds+\frac{1}{\nu'}\int_0^t(t-s)^{\frac{\alpha-1}{1-\alpha_1}}\,ds\\
\leq&\, |u_{0m}|_g^2+\frac{1}{\nu'}\int_0^t\|f(s)\|_{V_g'}^{2/\alpha_1}\,ds+\frac{T^{1+b}}{(1+b)\nu'}.
\end{align*}
It follows that for almost every $t\in[0,T]$
\begin{equation}\label{eqp3}
\displaystyle\sup_{t\in[0,T]}|u^{(m)}(t)|_g^2\leq |u_{0m}|_g^2+\frac{1}{\nu'}\int_0^T\|f(s)\|_{V_g'}^{2/\alpha_1}\,ds+\frac{T^{1+b}}{(1+b)\nu'}
\end{equation}
\begin{equation}\label{eqp4}
\int_0^t (t-s)^{\alpha-1}\|u^{(m)}(s)\|_g^2\, ds\leq \frac{1}{\nu'}|u_{0m}|_g^2+\frac{1}{\nu'^2}\int_0^t\|f(s)\|_{V_g'}^{2/\alpha_1}\,ds+\frac{T^{1+b}}{(1+b)\nu'^2}.
\end{equation}
The relation \eqref{eqp3} ensures that the sequence $\{u^{(m)}\}_m$ is bounded in $L^\infty(0,T;H_g)$. Moreover, from \eqref{eqp4}, one gets
\begin{align*}
T^{\alpha-1} \int_0^t\|u^{(m)}(s)\|_g^2\, ds\leq&\, \int_0^t (t-s)^{\alpha-1}\|u^{(m)}(s)\|_g^2\, ds\\
\leq&\,\frac{1}{\nu'}|u_{0m}|_g^2+\frac{1}{\nu'^2}\int_0^T\|f(s)\|_{V_g'}^{2/\alpha_1}\,ds+\frac{T^{1+b}}{(1+b)\nu'^2}.
\end{align*}
Hence the sequence $\{u^{(m)}\}_m$ is bounded in $L^2(0,T;V_g)$.

Let $\tilde u^{(m)}:\mathbb R\rightarrow V_g$ denote the function defined by
\begin{equation*}
 \tilde u^{(m)}(t)=
 \left \{
   \begin{array}{r c l}
    u^{(m)}(t),&\quad 0\leq t\leq T   \\
    0,&\quad \text{ otherwise}
   \end{array}
   \right .
\end{equation*}
and $\hat u^{(m)}$ denotes the Fourier transform of $\tilde u^{(m)}$. We show that the sequence $\{\tilde u^{(m)}\}_m$ remains bounded in $W^\gamma(\mathbb R, V_g, H_g)$. To do so, we need to verify that 
\begin{equation}\label{eqgamma}
\int_{-\infty}^{+\infty}|\tau|^{2\gamma}|\hat u^{(m)}(\tau)|^2\,d\tau\leq \mathrm{const.}\quad\text{for some }\gamma>0.
\end{equation}

In order to prove \eqref{eqgamma}, we observe that 
\begin{equation}\label{Fm}
(_0^CD_t^\alpha\tilde u^{(m)},u_k)_g=(\widetilde F_m,u_k)_g+(u_{m0},u_k)_g\,_{-\infty}I_t^{1-\alpha}\delta_0-( u^{(m)}(T),u_k)_g \,_{-\infty}I_t^{1-\alpha}\delta_T
\end{equation}
where $F_m=f-\nu A_g u^{(m)}-B_g(u^{(m)},u^{(m)})-\nu C_g u^{(m)}$ and $\delta_0,\,\delta_T$ are Dirac distributions at $0$ and $T$. Here $\widetilde F_m$ is defined as usual by

\begin{equation*}
 \widetilde F_m(t)=
 \left \{
   \begin{array}{r c l}
    F_m(t),&\quad 0\leq t\leq T   \\
    0,&\quad \text{ otherwise}.
   \end{array}
   \right .
\end{equation*}

Indeed, it is classical that since $\tilde u^{(m)}$ has two discontinuities at $0$ and $T$, the Caputo derivative of $\tilde u^{(m)}$ is given by

\begin{align*}
_{-\infty}^CD_t^\alpha\tilde u^{(m)}&= \,_{-\infty}I_t^{1-\alpha}\left(\frac{d}{dt}\tilde u^{(m)}\right)\\
&=\,_{-\infty}I_t^{1-\alpha}\left(\frac{d}{dt} u^{(m)}+u^{(m)}(0)\delta_0-u^{(m)}(T)\delta_T\right)\\
&=\,_{0}^CD_t^\alpha u^{(m)}+_{-\infty}I_t^{1-\alpha}\left(u^{(m)}(0)\delta_0-u^{(m)}(T)\delta_T\right).
\end{align*}
By the Fourier transform \eqref{Fm} yields to
\begin{align*}
(2i\pi\tau)^\alpha(\hat u^{(m)},u_k)_g=&(\widehat{F}_m,u_k)_g+(u_{m0},u_k)_g(2i\pi\tau)^{\alpha-1}\\
&\qquad\qquad-(u^{(m)}(T),u_k)_g(2i\pi\tau)^{\alpha-1}e^{-2i\pi T\tau}
\end{align*}here $\hat u^{(m)}$ and $\widehat{F}_m$ denote the Fourier transforms of $\tilde u^{(m)}$ and $\widetilde{F}_m$, respectively.

We multiply by $\hat \xi_k(\tau)$ and sum these equations for $k=1,\dots,m$ to get 
\begin{align*}
(2i\pi\tau)^\alpha|\hat u^{(m)}(\tau)|_g^2=&(\widehat{F}_m(\tau),\hat u^{(m)}(\tau))_g+(u_{m0},\hat u^{(m)}(\tau))_g(2i\pi\tau)^{\alpha-1}\\
&\qquad\qquad-(u^{(m)}(T),\hat u^{(m)}(\tau))_g(2i\pi\tau)^{\alpha-1}e^{-2i\pi T\tau}.
\end{align*}
We have
\begin{align*}
\int_0^T\|F_m(t)\|_{V_g'}\,dt\leq c\,\int_0^T(\|f(t)\|_{V_g'}+&|u^{(m)}(t)|_g\|u^{(m)}(t)\|_g+\|u^{(m)}(t)\|_g\\
&+|\nabla g|_\infty \|u^{(m)}(t)\|_g)dt.
\end{align*}
Therefore $\|F_m(t)\|_{V_g'}$ is bounded in $L^1(0,T;V_g')$. Hence 
\[
\displaystyle\sup_{\tau\in\mathbb R}\|\widehat F_m(\tau)\|_{V_g'}\leq c,\quad \forall m.
\]
Moreover, since $u_{m0}$ and $u^{(m)}(T)$ are bounded we have
\begin{align*}
|\tau|^\alpha|\hat u^{(m)}(\tau)|^2_g&\leq c_2\|\hat u^{(m)}(\tau)\|_g+c_3|\tau|^{\alpha-1}|\hat u^{(m)}(\tau)|_g\\
&\leq c_4(1\vee|\tau|^{\alpha-1})\|\hat u^{(m)}(\tau)\|_g.
\end{align*}
For $\gamma$ fixed, $\gamma<\alpha/4$, we observe that 
\[
|\tau|^{2\gamma}\leq c(\gamma)\frac{1+|\tau|^\alpha}{1+|\tau|^{\alpha-2\gamma}}.
\]
Then we can write
\begin{align*}
\int_{-\infty}^{+\infty}|\tau|^{2\gamma}|\hat u^{(m)}(\tau)|^2_g\leq & c_5(\gamma) \int_{-\infty}^{+\infty}\frac{1+|\tau|^\alpha}{1+|\tau|^{\alpha-2\gamma}}|\hat u^{(m)}(\tau)|^2_g\,d\tau\\
\leq & c_6(\gamma) \int_{-\infty}^{+\infty}\frac{1}{1+|\tau|^{\alpha-2\gamma}}\|\hat u^{(m)}(\tau)\|^2_g\,d\tau\\
&\qquad\qquad+c_7(\gamma) \int_{-\infty}^{+\infty}\frac{|\tau|^{\alpha-1}}{1+|\tau|^{\alpha-2\gamma}}\|\hat u^{(m)}(\tau)\|^2_g\,d\tau.
\end{align*}
By Parseval inequality, the first integral is bounded as $m\to\infty$. Applying the Schwartz inequality, the second integral yields to
\begin{align*}
\int_{-\infty}^{+\infty}\frac{|\tau|^{\alpha-1}}{1+|\tau|^{\alpha-2\gamma}}\|\hat u^{(m)}(\tau)\|^2_g\,d\tau\leq&\left(\int_{-\infty}^{+\infty}\frac{d\tau}{(1+|\tau|^{\alpha-2\gamma})^2}\right)^{1/2}\\
&\qquad\qquad\times\left(\int_{-\infty}^{+\infty}|\tau|^{2\alpha-2}\|\hat u^{(m)}(\tau)\|^2_g\,d\tau\right)^{1/2}.
\end{align*}

The first integral is finite due to $\gamma<\alpha/4$. On the other hand, it follows from the Parseval equality that
\begin{align*}
\int_{-\infty}^{+\infty}|\tau|^{2\alpha-2}\|\hat u^{(m)}(\tau)\|^2_g\,d\tau&=\int_{-\infty}^{+\infty}\|\,_{-\infty}\mathrm I_t^{1-\alpha}\tilde u^{(m)}(t)\|_g^2\,dt\\
&=\int_0^T\|\,_0\mathrm I_t^{1-\alpha} u^{(m)}(t)\|^2_g\,dt\\
&\leq\left(\frac{T^{1-\alpha}}{\Gamma(2-\alpha)}\right)^2\int_0^T\|u^{(m)}(t)\|_V^2\,dt.
\end{align*}

Which implies that \eqref{eqgamma} holds.

We know that a subsequence of $\{u^{(m)}\}_m$ (which we will denote with the same symbol) converges to some $u$ weakly in $L^2(0,T;V_g)$ and weak-star in $L^\infty(0,T;H_g)$ with $u\in L^2(0,T;V_g)\cap L^\infty(0,T;H_g)$. As $W^\gamma(0,T,V_g;H_g)$ is compactly embedded in $L^2(0,T; H_g)$ then $\{u^{(m)}\}_m$ strongly converges in $L^2(0,T;H_g)$.

In order to pass to the limit, we consider the scalar function $\psi$ continuously differentiable  on $[0,T]$ and such that $\psi(T)=0$. In the first equation of the system \eqref{eqp1}, we consider $\psi(t)u_k$ instead of $u_k$ and then we integrate by parts. This leads to the equation
\begin{align*}
\int_0^T&(u^{(m)}(t),_t\mathrm D_T^\alpha\psi(t)u_k)_g\,dt+\int_0^T b_g(u^{(m)}(t),u^{(m)}(t),\psi u_k)\,dt+\nu\int_0^T((u^{(m)}(t),\psi u_k))_g\\
&+\nu\int_0^Tb_g(\frac{\nabla g}{g},u^{(m)}(t),\psi u_k)\,dt=(u_{0m},_0\mathrm I_T^{1-\alpha}\psi(t) u_k)_g+\int_0^T(f(t),u_k)_g\,dt.
\end{align*}
Moreover we have the following convergence
\begin{align*}
\displaystyle\lim_{m\to+\infty} \int_0^T b_g(u^{(m)}(t),u^{(m)}(t),&\psi(t) u_k)\,dt=-\displaystyle\lim_{m\to+\infty} \int_0^T b_g(u^{(m)}(t),\psi u_k,u^{(m)}(t))\,dt\\
&=-\displaystyle\lim_{m\to+\infty} \displaystyle\sum_{i,j=1}^d\int_0^T\int_\Omega u^{(m)}_i \mathrm D_i(u_k)_j u^{(m)}_j\,dx\,\psi(t)\,gdxdt\\
 &=- \displaystyle\sum_{i,j=1}^d\int_0^T\int_\Omega (u)_i \mathrm D_i(u_k)_j (u)_j\,dx\,\psi(t)\,gdxdt\\
&=- \int_0^T b_g(u(t),\psi u_k,u(t))\,dt \\
&=\int_0^T b_g(u(t),u,\psi(t) u_k)\,dt.
\end{align*}where $D_i$ stands for the partial derivative with respect to $x_i$.
In the same way, one can prove that 

\[
\displaystyle\lim_{m\to+\infty}\int_0^Tb_g(\frac{\nabla g}{g},u^{(m)}(t),\psi u_k)\,dt=\int_0^Tb_g(\frac{\nabla g}{g},u(t),\psi u_k)\,dt.
\]

It then follows that 
\begin{align}\label{eqproof}
\int_0^T&(u(t),_t\mathrm D_T^\alpha\psi(t)u_k)_g\,dt+\int_0^T b_g(u(t),u(t),\psi u_k)\,dt+\nu\int_0^T((u(t),\psi u_k))_g\\
&+\nu\int_0^Tb_g(\frac{\nabla g}{g},u(t),\psi u_k)\,dt=(u_{0},\,_0\mathrm I_T^{1-\alpha}\psi(t) u_k)_g+\int_0^T(f(t),u_k)_g\,dt.
\end{align}
This equation holds for $v$ which is finite linear combination of $u_k$, $k=1,\dots,m$ and by continuity it holds for any $v$ in $V_g$. It then follows that $u$ satisfies the equation \eqref{eqdef}. To end the proof it still to check that $u$ satisfies the initial condition $u(0)=u_0$. To do so it suffices to multiply \eqref{eqdef} by $\psi$ and integrate. By making use of the integration by parts and comparing with \eqref{eqproof}, one can find that 
\[
(u_{0}-u(0), v)_g\,_0\mathrm I_T^{1-\alpha}\psi(t)=0
\]which lead to the desired result by taking a particular choice of $\psi$.
\end{proof}
\begin{theorem}
The solution u of problem \eqref{eqdef} is unique.
\end{theorem}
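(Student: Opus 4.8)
The plan is the classical energy argument transplanted to the Caputo setting. Let $u_1,u_2\in L^2(0,T;V_g)\cap L^\infty(0,T;H_g)$ be two weak solutions of \eqref{eqdef} with the same data, put $w=u_1-u_2$, and subtract the two equations. Using the bilinear identity $B_g[u_1]-B_g[u_2]=B(u_1,w)+B(w,u_2)$ one obtains
\[
\partial_t^\alpha w+\nu A_g w+\nu C_g w+B(u_1,w)+B(w,u_2)=0\quad\text{in }L^2(0,T;V_g'),\qquad w(0)=0 .
\]
Since $w\in L^2(0,T;V_g)$ and $\partial_t^\alpha w\in L^2(0,T;V_g')$, we may pair this identity with $w(t)$. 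Invoking the facts recalled at the end of Section 2, namely $\langle\partial_t^\alpha w,w\rangle\ge\tfrac12\,{}_0^{C}D_t^\alpha|w|_g^2$, together with $(A_g w,w)_g=\|w\|_g^2$, $(C_gw,w)_g=b_g(\tfrac{\nabla g}{g},w,w)$ and the cancellation $b_g(u_1,w,w)=0$, we arrive at the differential inequality
\[
\tfrac12\,{}_0^{C}D_t^\alpha|w(t)|_g^2+\nu\|w(t)\|_g^2\le \nu\bigl|b_g(\tfrac{\nabla g}{g},w,w)\bigr|+\bigl|b_g(w,u_2,w)\bigr| .
\]

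For the first term on the right we estimate $|b_g(\tfrac{\nabla g}{g},w,w)|\le \tfrac{|\nabla g|_\infty}{m_0}|w|_g\|w\|_g\le \tfrac{\nu}{4}\|w\|_g^2+\tfrac{|\nabla g|_\infty^2}{\nu m_0^2}|w|_g^2$ by Young's inequality. For the convective term we use the antisymmetry $b_g(w,u_2,w)=-b_g(w,w,u_2)$ and the two-dimensional trilinear estimate from Section 3 to get $|b_g(w,u_2,w)|\le c\,|w|_g^{1/2}\|w\|_g^{3/2}|u_2|_g^{1/2}\|u_2\|_g^{1/2}\le \tfrac{\nu}{4}\|w\|_g^2+c_\nu|u_2(t)|_g^2\|u_2(t)\|_g^2|w|_g^2$. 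Because $u_2\in L^\infty(0,T;H_g)$ we may bound $|u_2(t)|_g^2$ by a constant $M$, and because $u_2\in L^2(0,T;V_g)$ the coefficient $\phi(t):=\tfrac{2|\nabla g|_\infty^2}{\nu m_0^2}+2c_\nu M\|u_2(t)\|_g^2$ lies in $L^1(0,T)$. Absorbing the two $\|w\|_g^2$ contributions into the left-hand side yields
\[
{}_0^{C}D_t^\alpha|w(t)|_g^2\le \phi(t)\,|w(t)|_g^2,\qquad |w(0)|_g^2=0,\qquad \phi\in L^1(0,T) .
\]
Applying the fractional integral ${}_0I_t^\alpha$ and using $w(0)=0$ (exactly as in the a priori estimates of the preceding theorem, together with the Lemma of Section 2) gives $|w(t)|_g^2\le \tfrac{1}{\Gamma(\alpha)}\int_0^t(t-s)^{\alpha-1}\phi(s)|w(s)|_g^2\,ds$; since $t\mapsto|w(t)|_g^2$ is bounded on $[0,T]$, the generalized Gronwall inequality with singular kernel forces $|w(t)|_g^2\equiv0$, i.e. $u_1=u_2$.

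The only genuinely delicate point is the convective term $b_g(w,u_2,w)$: in contrast with the existence proof, the middle argument is merely $L^2$ in time with values in $V_g$, so one must use the two-dimensional inequality (Ladyzhenskaya) to move the full gradient onto $w$ while keeping $u_2$ in $L^4_g$; this is precisely where the hypothesis $\Omega\subset\mathbb R^2$ enters, and it is what produces the $L^1(0,T)$ (rather than $L^\infty$) Gronwall coefficient. A secondary technical issue is that the chain-rule inequality $\langle\partial_t^\alpha w,w\rangle\ge\tfrac12\,{}_0^{C}D_t^\alpha|w|_g^2$ is, strictly speaking, stated for differentiable functions, so applying it to an object with only weak-solution regularity should be justified by the same Galerkin approximation already used above (or by a regularization argument), and one needs the version of the fractional Gronwall lemma valid for integrable, not merely bounded, coefficients.
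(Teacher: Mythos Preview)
Your argument is correct and follows essentially the same energy/fractional-Gronwall scheme as the paper: subtract, test with $w$, use $b_g(u_1,w,w)=0$, estimate the remaining trilinear terms, and close with the singular-kernel Gronwall inequality.

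The one noteworthy difference is your treatment of the $C_g$ contribution $\nu\,b_g(\nabla g/g,w,w)$. The paper bounds it by $\nu\,\dfrac{|\nabla g|_\infty}{m_0\lambda_1^{1/2}}\|w\|_g^2$ via Poincar\'e and then absorbs it into the dissipation, which is exactly where the structural hypothesis $H(g)$ is invoked. You instead apply Young's inequality and throw the $|w|_g^2$ piece into the Gronwall coefficient $\phi$; this avoids $H(g)$ altogether for uniqueness, at the price of a slightly larger (but still $L^1$) coefficient. Similarly, for $b_g(w,u_2,w)$ the paper uses the cruder bound $c_0|w|_g\|w\|_g\|u_2\|_g$ while you use the full Ladyzhenskaya form; both lead to an $L^1$-in-time coefficient. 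A minor bookkeeping point: as written, your Young split on the $C_g$ term produces $\tfrac{\nu^2}{4}\|w\|_g^2$ after multiplying by the outer $\nu$, so the absorption into $\nu\|w\|_g^2$ only works for small $\nu$; just rescale the Young parameter to fix this.
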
 
\begin{proof}
Let $u_1$ and $u_2$ be two weak solutions with the same initial condition. Let $w=u_1-u_2$. Then we have
\[
(\,_0^CD_t^\alpha w,v)_g+b_g(u_1,u_2,v)-b_g(u_2,u_2,v)+\nu((w,v))_g+\nu b_g(\frac{\nabla g}{g},w,v)=0.
\]
Taking $v=w$, one obtains
\begin{align*}
\,_0^CD_t^\alpha |w|_g^2+2\nu\|w\|_g^2&\leq 2b_g(w,w,u_2)-2\nu b_g(\frac{\nabla g}{g},w,w)\\
&\leq c_0|w|_g\|w\|_g\|u_2\|_g+2\nu\frac{|\nabla g|_\infty}{m_0\lambda^{1/2}}\|w\|_g^2\\
&\leq \nu\|w\|_g^2+c_1|w|^2_g\|u_2\|^2_g+2\nu\frac{|\nabla g|_\infty}{m_0\lambda^{1/2}}\|w\|_g^2.
\end{align*}
It follows 

\begin{equation*}
\,_0^CD_t^\alpha |w|_g^2+\nu'\|w\|_g^2\leq c_1|w|^2_g\|u_2\|^2_g
\end{equation*}
where $\nu'=\nu(1-\frac{2|\nabla g|_\infty}{m_0\lambda^{1/2}})>0$. Hence
\begin{equation*}
\,_0^CD_t^\alpha |w|_g^2\leq c_1|w|^2_g\|u_2\|^2_g.
\end{equation*}
It follows
\[ 
|w(t)|_g^2\leq |w(0)|^2_g+\frac{c_2}{\Gamma(\alpha)}\int_0^t(t-s)^{\alpha-1}|w(s)|_g^2\|u_2(s)\|_g^2\,ds.
\]
and by Gronwall inequality
\begin{align*} 
|w(t)|_g^2&\leq |w(0)|^2_g\,\exp\left(\frac{c_2}{\Gamma(\alpha)}\int_0^t(t-s)^{\alpha-1}\|u_2(s)\|_g^2\,ds\right)\\
&\leq  |w(0)|^2_g\,\exp\left(\frac{c_2}{\Gamma(\alpha)}\left(|u_2(0)|_g^2+\frac{1}{\nu'}\int_0^T\|f(s)\|_{V_g'}^{2/\alpha_1}+\frac{T^{1+b}}{(1+b)\nu'}\right)\right).
\end{align*}
Since $|w(0)|_g=0$, it follows that $|w|_g=0$, which complete the proof.
\end{proof}

\section{Optimal Control}

Let $\mathcal U_g=L^{2/\alpha_1}(0,T;H_g)$ be the space of controls. For every $f\in\mathcal U_g$, we denote by $ S(f)\subset L^2(0,T;V_g)\cap L^\infty(0,T;H_g):=\mathcal V_g$ the solution set corresponding to $f$ of the problem \eqref{eqdef}. It is then clear, by definition, that $ S(f)$ is nonempty for all $f\in\mathcal U_g$.

\begin{theorem}\label{fn}
Assume that $f_n,\,f\in L^{2/\alpha_1}(0,T;H_g)$ such that $f_n$ converges to $f$ weakly in $L^{2/\alpha_1}(0,T;H_g)$. Then, for every sequence $\{u_n\}_n$ such that $u_n\in S(f_n)$, we can find a subsequence of $\{u_n\}_n$ that converges weakly in $L^2(0,T,V_g)$ and weakly-star in $L^\infty(0,T;H_g)$ to some $u\in L^2(0,T;V_g)\cap L^\infty(0,T;H_g)$ such that $u\in S(f)$.
\end{theorem}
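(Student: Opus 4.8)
The plan is to re-run, uniformly in $n$, the Galerkin/compactness argument of the existence proof, the only genuinely new point being that the data now converge weakly rather than strongly --- which, as it turns out, affects only the linear forcing term and costs essentially nothing. First I would collect uniform \emph{a priori} bounds. Since $f_n\rightharpoonup f$ in $L^{2/\alpha_1}(0,T;H_g)$, the sequence $\{f_n\}_n$ is bounded there, hence also bounded in $L^2(0,T;V_g')$ (the interval is finite and $H_g\hookrightarrow V_g'$). Every $u_n\in S(f_n)$ therefore satisfies the energy estimates \eqref{eqp3}--\eqref{eqp4} of the existence proof with right-hand sides bounded uniformly in $n$, so $\{u_n\}_n$ is bounded in $L^\infty(0,T;H_g)\cap L^2(0,T;V_g)$. (To justify these estimates for the weak solutions $u_n$ themselves one tests \eqref{eqdef} with $u_n$, invoking $(u_n,\partial_t^\alpha u_n)_{H_g}\ge\tfrac12\partial_t^\alpha|u_n|_g^2$ and the Gronwall-type lemma of Section~2; alternatively one uses that, by uniqueness, $u_n$ is the strong limit of its own Galerkin approximants, each obeying \eqref{eqp3}--\eqref{eqp4}, and passes to the limit by weak lower semicontinuity.)

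Next I would obtain a uniform bound on the fractional derivative. From \eqref{eqdef}, $\partial_t^\alpha u_n=f_n-\nu A_g u_n-\nu C_g u_n-B_g[u_n]$; using $\|A_gu_n\|_{V_g'}\le\|u_n\|_g$, $|C_gu_n|_g\le\tfrac{|\nabla g|_\infty}{m_0}\|u_n\|_g$, and the two-dimensional bound $\|B_g(u_n,u_n)\|_{V_g'}\le c\,|u_n|_g\|u_n\|_g$ (so that $\int_0^T\|B_g(u_n,u_n)\|_{V_g'}^2\,dt\le\|u_n\|_{L^\infty(0,T;H_g)}^2\|u_n\|_{L^2(0,T;V_g)}^2$), each term on the right is bounded in $L^2(0,T;V_g')$ uniformly in $n$. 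Hence $\{u_n\}_n$ is bounded in $W^\alpha(0,T;V_g,V_g')$, which by the compactness result recalled in Section~2 is compactly embedded in $L^2(0,T;H_g)$. Extracting a subsequence (not relabelled) I obtain $u\in L^2(0,T;V_g)\cap L^\infty(0,T;H_g)$ with $u_n\rightharpoonup u$ in $L^2(0,T;V_g)$, $u_n\to u$ strongly in $L^2(0,T;H_g)$, $u_n\rightharpoonup u$ weakly-$\ast$ in $L^\infty(0,T;H_g)$, and $\partial_t^\alpha u_n\rightharpoonup\partial_t^\alpha u$ in $L^2(0,T;V_g')$. (Equivalently one could reproduce the Fourier-transform estimate \eqref{Fm}--\eqref{eqgamma} to bound the zero-extensions $\tilde u_n$ in $W^\gamma(\mathbb R,V_g,H_g)$.)

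Then I would pass to the limit in the weak formulation. Testing the equation for $u_n$ against $\psi(t)u_k$ with $\psi\in C^1([0,T])$, $\psi(T)=0$, and integrating by parts in the Caputo term exactly as in Section~2, one gets the $n$-analogue of \eqref{eqproof}. The terms $\int_0^T((u_n,\psi u_k))_g\,dt$, $\int_0^T b_g(\tfrac{\nabla g}{g},u_n,\psi u_k)\,dt$ and $\int_0^T(u_n,{}_t\mathrm D_T^\alpha\psi\,u_k)_g\,dt$ pass to the limit by the weak convergence in $L^2(0,T;V_g)$ (resp.\ $L^2(0,T;H_g)$); the convection term, rewritten $-\int_0^T b_g(u_n,\psi u_k,u_n)\,dt$, converges to $-\int_0^T b_g(u,\psi u_k,u)\,dt=\int_0^T b_g(u,u,\psi u_k)\,dt$ precisely as in the existence proof, combining the strong convergence of $u_n$ in $L^2(0,T;H_g)$ with the weak convergence in $L^2(0,T;V_g)$; and the forcing term $\int_0^T(f_n,\psi u_k)_g\,dt\to\int_0^T(f,\psi u_k)_g\,dt$ because $\psi u_k\in L^\infty(0,T;H_g)$ lies in the dual $L^{2/(2-\alpha_1)}(0,T;H_g)$ of $L^{2/\alpha_1}(0,T;H_g)$ and $f_n\rightharpoonup f$ there. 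By density the resulting identity holds for every $v\in V_g$, so $u$ satisfies \eqref{eqdef} with data $f$; recovering $u(0)=u_0$ by the comparison argument that closes the existence proof (multiply \eqref{eqdef} by $\psi$, integrate by parts, compare, and choose $\psi$ so that $\,_0\mathrm I_T^{1-\alpha}\psi\neq0$) then yields $u\in S(f)$.

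The main obstacle is, as in the existence theorem, the passage to the limit in the convection term, which rests entirely on the fractional Aubin--Lions type compactness of $W^\alpha(0,T;V_g,V_g')$ in $L^2(0,T;H_g)$ together with the bound on $\partial_t^\alpha u_n$ read off from \eqref{eqdef}; once these are secured, relaxing the convergence of the data from strong to weak is essentially free, since the data enter only through the linear term $\int_0^T(f_n,\psi u_k)_g\,dt$, which is stable under weak convergence against the fixed test function $\psi u_k$.
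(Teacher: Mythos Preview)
Your argument is correct and follows the same skeleton as the paper's own proof: energy estimates yield uniform bounds in $L^\infty(0,T;H_g)\cap L^2(0,T;V_g)$, a compactness step upgrades to strong convergence in $L^2(0,T;H_g)$, and then one passes to the limit term by term. The paper's proof is considerably terser --- it multiplies by $u_n$, records the bound, and then simply invokes ``the same argument as in the existence theorem'' for strong $L^2(0,T;H_g)$ convergence and ``the continuity of $A_g$, $C_g$ and the weak continuity of $B_g$'' for the limit passage, without spelling out the forcing term or the initial condition.

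The one mild methodological difference is that you obtain compactness by bounding $\partial_t^\alpha u_n$ directly in $L^2(0,T;V_g')$ from the equation and then applying the compact embedding $W^\alpha(0,T;V_g,V_g')\hookrightarrow L^2(0,T;H_g)$, whereas the paper (by deferring to the existence proof) implicitly re-runs the Fourier-transform estimate on the zero-extensions $\tilde u_n$ in $W^\gamma(\mathbb R,V_g,H_g)$. Your route is shorter and more natural here, since the $u_n$ are genuine solutions (not Galerkin approximants) and the equation already hands you the fractional derivative in the right space; you also note the Fourier alternative, so nothing is lost. In addition, you make explicit two points the paper glosses over: that the weakly convergent forcing $f_n$ passes to the limit against the fixed test function $\psi u_k\in L^{2/(2-\alpha_1)}(0,T;H_g)$, and that the initial condition $u(0)=u_0$ is recovered by the same comparison argument as in the existence proof.
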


\begin{proof}
Let $f_n,\,f\in L^{2/\alpha_1}(0,T;H_g)$ such that
\[
f_n\rightharpoonup f \text{ weakly in }L^{2/\alpha_1}(0,T;H_g).
\]
There exists $u_n\in L^2(0,T;V_g)\cap L^\infty(0,T;H_g)$ such that
\begin{align*}
\,_0^C D_t^\alpha u_n(t)+\nu A_g &u_n(t)+\nu C_g u_n(t)\\&+B_g(u_n(t),u_n(t))=f_n(t)\quad \text{ for a.e. }t\in[0,T].
\end{align*}
Multiplying by $u_n$, one obtains
\[
\, _0^C D_t^\alpha |u_n(t)|_g^2+\nu'\|u_n(t)\|_g^2\leq \frac{1}{\nu}\|f_n(t)\|_{g'}^2.
\]
This yields to 
\begin{align*}
|u(t)|_g^2+\nu'\int_0^t(t-s)^{\alpha-1}\|u(s)\|^2_g\,ds&\leq|u(0)|_g^2+\frac{1}{\nu}\int_0^t(t-s)^{\alpha-1}\|f_n(s)\|^2_{g'}\,ds\\
&\leq |u(0)|_g^2+\frac{1}{\nu}\int_0^T\|f_n(s)\|^{\frac{2}{\alpha_1}}_{g'}\,ds+\frac{T^{1+b}}{(1+b)\nu}.
\end{align*}
Therefore the sequence $\{u_n\}_n$ is bounded in $L^2(0,T;V_g)$ and in $L^\infty(0,T;H_g)$. Thus there exists an element $\hat u\in L^2(0,T;V_g)\cap L^\infty(0,T;H_g)$ such that $\{u_n\}_n$ converges weakly in $L^2(0,T;V_g)$ and star-weakly in $L^\infty(0,T;H_g)$. Moreover, with the same argument as in the existence theorem,  the sequence $\{u_n\}_n$ converges to $\hat u$ strongly in  $L^2(0,T;H_g)$. Finally, from the continuity of $A_g$ and $C_g$ and the weak continuity of $B_g$ we deduce that $\hat u\in S(f)$.

\end{proof}

\begin{corollary}\label{corn1}
Assume that $\varphi_n,\,\varphi,\,h\in L^{2/\alpha_1}(0,T;H_g)$ such that $\varphi_n$ converges to $\varphi$ weakly in $L^{2/\alpha_1}(0,T;H_g)$. Then, for every sequence $\{u_n\}_n$ such that $u_n\in S(\varphi_n+h)$, we can find a subsequence of $\{u_n\}_n$ that converges weakly in $L^2(0,T,V_g)$ and weakly-star in $L^\infty(0,T;H_g)$ to some $u\in L^2(0,T;V_g)\cap L^\infty(0,T;H_g)$ such that $u\in S(\varphi+h)$.
\end{corollary}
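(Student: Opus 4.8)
The plan is to deduce this directly from Theorem \ref{fn} by a translation argument. First I would set $f_n := \varphi_n + h$ and $f := \varphi + h$, both of which lie in $L^{2/\alpha_1}(0,T;H_g)$ since that space is a vector space and $\varphi_n,\varphi,h$ all belong to it. The only thing to check before invoking Theorem \ref{fn} is that $f_n \rightharpoonup f$ weakly in $L^{2/\alpha_1}(0,T;H_g)$; this follows immediately from $\varphi_n \rightharpoonup \varphi$, because for every $\ell$ in the dual space $L^{(2/\alpha_1)'}(0,T;H_g)$ one has $\ell(f_n) = \ell(\varphi_n) + \ell(h) \to \ell(\varphi) + \ell(h) = \ell(f)$, the term $\ell(h)$ being constant in $n$.

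Next, observe that by hypothesis $u_n \in S(\varphi_n + h) = S(f_n)$, i.e. each $u_n$ is a weak solution of \eqref{eqdef} with right-hand side $f_n$. Theorem \ref{fn} then applies verbatim: there is a subsequence of $\{u_n\}_n$, still denoted $\{u_n\}_n$, and an element $u \in L^2(0,T;V_g)\cap L^\infty(0,T;H_g)$ such that $u_n \rightharpoonup u$ weakly in $L^2(0,T;V_g)$, $u_n \overset{\ast}{\rightharpoonup} u$ weakly-star in $L^\infty(0,T;H_g)$, and $u \in S(f) = S(\varphi + h)$. This is precisely the assertion of the corollary.

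There is essentially no obstacle here: the statement is a notational repackaging of Theorem \ref{fn} obtained by absorbing the fixed forcing term $h$ into the sequence, and the one substantive point — stability of weak convergence under addition of a fixed vector — is elementary. If one wished to be fully self-contained one could instead re-run the energy estimate: multiplying the equation for $u_n$ by $u_n$ and using $H(g)$ yields $\,_0^C D_t^\alpha|u_n|_g^2 + \nu'\|u_n\|_g^2 \le \tfrac{1}{\nu}\|\varphi_n + h\|_{g'}^2$, and since $\|\varphi_n+h\|_{g'} \le \|\varphi_n\|_{g'} + \|h\|_{g'}$ stays bounded (weakly convergent sequences being bounded), the uniform bounds in $L^2(0,T;V_g)$ and $L^\infty(0,T;H_g)$ follow exactly as before, after which the compactness and weak/weak-star extraction and the passage to the limit in the nonlinear term proceed identically. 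But invoking Theorem \ref{fn} as a black box is the cleanest route.
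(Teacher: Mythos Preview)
Your proposal is correct and follows exactly the paper's own approach: the paper's proof is the single line ``It suffices to take $f_n=\varphi_n+h$ and apply Theorem \ref{fn}.'' Your additional justification that weak convergence is preserved under translation by the fixed element $h$ merely spells out the one step the paper leaves implicit.
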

\begin{proof}
It suffices to take $f_n=\varphi_n+f$ and apply Theorem \ref{fn}.
\end{proof}

Let $\mathcal U_g$ be a real Hilbert space and $C\in\mathscr L(\mathcal U_g,H_g)$ a bounded linear operator from $\mathcal U_g$ to $H_g$. We have the following corollary:

\begin{corollary}\label{corn2}
Assume that $w_n,\,w\in L^{2/\alpha_1}(0,T;\mathcal U_g)$ and $f\in L^{2/\alpha_1}(0,T;V'_g)$ such that $w_n$ converges to $w$ weakly in $L^{2/\alpha_1}(0,T;\mathcal U_g)$. Then, for every sequence $\{u_n\}_n$ such that $u_n\in S(Cw_n+f)$, we can find a subsequence of $\{u_n\}_n$ that converges weakly in $L^2(0,T,V_g)$ and weakly-star in $L^\infty(0,T;H_g)$ to some $u\in L^2(0,T;V_g)\cap L^\infty(0,T;H_g)$ such that $u\in S(Cw+f)$.
\end{corollary}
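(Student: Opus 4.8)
The plan is to reduce the statement to the stability argument already carried out for $V_g'$-valued data (the proof of Theorem \ref{fn}), the only genuinely new ingredient being the behaviour of the operator $C$ under weak convergence. First I would note that $C\in\mathscr L(\mathcal U_g,H_g)$ induces, by pointwise composition, a bounded linear operator $L^{2/\alpha_1}(0,T;\mathcal U_g)\to L^{2/\alpha_1}(0,T;H_g)$ of norm at most $\|C\|$, and that bounded linear operators between Banach spaces are weak-to-weak continuous; hence $w_n\rightharpoonup w$ yields $Cw_n\rightharpoonup Cw$ weakly in $L^{2/\alpha_1}(0,T;H_g)$. Since $H_g\hookrightarrow V_g'$ continuously and $f$ is a fixed element of $L^{2/\alpha_1}(0,T;V_g')$, it follows that $f_n:=Cw_n+f$ converges to $f_*:=Cw+f$ weakly in $L^{2/\alpha_1}(0,T;V_g')$, and in particular $\{f_n\}_n$ is bounded there. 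If one had assumed $f\in L^{2/\alpha_1}(0,T;H_g)$ the conclusion would be immediate from Corollary \ref{corn1} with $\varphi_n=Cw_n$, $\varphi=Cw$, $h=f$; for $f$ merely in $V_g'$ one repeats the proof of Theorem \ref{fn} with the forcing $f_n$.

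I would then rerun that proof. Testing the equation satisfied by $u_n\in S(f_n)$ with $u_n$, using $b_g(u_n,u_n,u_n)=0$ and $H(g)$, gives $\,_0^CD_t^\alpha|u_n|_g^2+\nu'\|u_n\|_g^2\le\frac{1}{\nu}\|f_n\|_{V_g'}^2$; integrating of order $\alpha$ and splitting the convolution by Hölder with exponent $1/\alpha_1$ exactly as in the derivation of \eqref{eqp3}--\eqref{eqp4} produces bounds on $\sup_t|u_n(t)|_g^2$ and on $\int_0^T\|u_n(t)\|_g^2\,dt$ depending only on $|u_0|_g$, $\|f_n\|_{L^{2/\alpha_1}(0,T;V_g')}$ and $T$, hence uniform in $n$. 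So $\{u_n\}_n$ is bounded in $L^2(0,T;V_g)\cap L^\infty(0,T;H_g)$, and a subsequence converges weakly in $L^2(0,T;V_g)$ and weakly-star in $L^\infty(0,T;H_g)$ to some $\hat u$ in that space. For the nonlinear term I would reprove the fractional bound $\int_{\mathbb R}|\tau|^{2\gamma}|\widehat{\tilde u_n}(\tau)|_g^2\,d\tau\le\mathrm{const.}$ for $0<\gamma<\alpha/4$ as in the existence theorem — the needed control of $\widehat F_n$ uses only the $L^1(0,T;V_g')$-bound of $f_n$, itself a consequence of its $L^{2/\alpha_1}$-bound — so that the compact embedding $W^\gamma(0,T;V_g,H_g)\hookrightarrow L^2(0,T;H_g)$ upgrades the convergence to strong convergence of $u_n$ to $\hat u$ in $L^2(0,T;H_g)$.

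Finally I would pass to the limit in the weak formulation tested against $\psi(t)u_k$ with $\psi\in C^1([0,T])$, $\psi(T)=0$: the $A_g$- and $C_g$-terms pass by the weak $L^2(0,T;V_g)$ convergence, the trilinear term $\int_0^T b_g(u_n,u_n,\psi u_k)\,dt=-\int_0^T b_g(u_n,\psi u_k,u_n)\,dt$ passes by combining strong $L^2(0,T;H_g)$ and weak $L^2(0,T;V_g)$ convergence exactly as in the computation preceding \eqref{eqproof}, and the forcing term passes because $f_n\rightharpoonup f_*$ in $L^2(0,T;V_g')$; the identity $\hat u(0)=u_0$ is recovered by the same integration-by-parts comparison used to close the existence proof. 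Hence $\hat u\in S(Cw+f)$, which is the claim. The main obstacle is not the weak continuity of $C$, which is routine, but ensuring the a priori estimates are uniform in $n$ while the data live only in $V_g'$ and the nonlinearity fails to be weakly continuous on $L^2(0,T;V_g)$ alone; both difficulties are handled precisely as in Theorem \ref{fn} — the first via the $L^{2/\alpha_1}$-integrability and the convolution splitting, the second via the $W^\gamma$-compactness.
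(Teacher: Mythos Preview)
Your proof is correct and follows the same reduction as the paper --- the paper simply sets $\varphi_n=Cw_n$, $h=f$ and invokes Corollary~\ref{corn1} in one line. You are in fact more careful: you justify explicitly the weak-to-weak continuity of the Nemytskii operator induced by $C\in\mathscr L(\mathcal U_g,H_g)$, and you handle the discrepancy that $f$ is assumed only in $L^{2/\alpha_1}(0,T;V_g')$ (whereas Corollary~\ref{corn1} is stated for $H_g$-valued $h$) by rerunning the a~priori estimates and compactness argument of Theorem~\ref{fn}, a point the paper glosses over.
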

\begin{proof}
It suffices to take $f_n=Cw_n$ and apply Corollary \ref{corn1}.
\end{proof}
Let $\mathcal U_{ad}$ be a nonempty subset of $\mathcal U_g$ consisting of admissible controls. Let $\mathscr F:\mathcal U_g\times\mathcal V_g\rightarrow \mathbb R$ be the objective functional we want to minimize. The control problem reads as follows: Find a control $\hat f\in\mathcal U_{ad}$ and a state $\hat u\in S(\hat f)$ such that
\begin{equation}\label{control}
\mathscr F(\hat f,\hat u)=\inf\left\{\mathscr F(f,u):\,f\in\mathcal U_{ad},\, u\in S( f)  \right\}.
\end{equation}
A couple which solves \eqref{control} is called an optimal solution. The existence of such optimal control can be proved by using Theorem \ref{fn}. To do so, we need the following additional hypotheses:

\begin{itemize}
\item[$H(\mathcal U_{ad})$]$\quad \mathcal U_{ad}$ is a bounded and weakly closed subset of $\mathcal U$.
\end{itemize}

\begin{itemize}
\item[$H(\mathscr F)$] $\quad \mathscr F$ is lower semicontinuous with respect to $\mathcal U\times\mathcal V$ endowed with the weak topology.
\end{itemize}

\begin{theorem}\label{existencecontrol}
Assume that $H(g)$, $H(\mathcal U_{ad})$ and $H(\mathscr F)$ are fulfilled. Then the problem \eqref{control} has an optimal control.
\end{theorem}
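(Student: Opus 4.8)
The plan is to use the classical direct method of the calculus of variations, whose ingredients have all been prepared in the preceding results. Set $d^\star=\inf\{\mathscr F(f,u):\,f\in\mathcal U_{ad},\,u\in S(f)\}$. First I would argue that this infimum is finite: since $S(f)$ is nonempty for every $f\in\mathcal U_g$ (as remarked after the definition of $S$), the feasible set $\{(f,u):f\in\mathcal U_{ad},\,u\in S(f)\}$ is nonempty, so $d^\star<+\infty$; one should also implicitly assume $\mathscr F$ is bounded below on the relevant set (or that this is part of $H(\mathscr F)$), so $d^\star>-\infty$. Then pick a minimizing sequence $\{(f_n,u_n)\}_n$ with $f_n\in\mathcal U_{ad}$, $u_n\in S(f_n)$, and $\mathscr F(f_n,u_n)\to d^\star$.

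Next I would extract convergent subsequences. By $H(\mathcal U_{ad})$ the set $\mathcal U_{ad}$ is bounded in the Hilbert space $\mathcal U_g$, hence $\{f_n\}_n$ admits a subsequence (not relabelled) converging weakly in $\mathcal U_g$ to some $\hat f$; since $\mathcal U_{ad}$ is weakly closed, $\hat f\in\mathcal U_{ad}$. Here I am reading $\mathcal U_g$ in the sense needed to apply Theorem \ref{fn}, i.e. as $L^{2/\alpha_1}(0,T;H_g)$ (or one composes with the operator $C$ and invokes Corollary \ref{corn2} instead); the point is that weak convergence $f_n\rightharpoonup\hat f$ in the control space is exactly the hypothesis of Theorem \ref{fn}. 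Applying Theorem \ref{fn} to the sequence $u_n\in S(f_n)$ yields a further subsequence converging weakly in $L^2(0,T;V_g)$ and weakly-star in $L^\infty(0,T;H_g)$ to some $\hat u\in\mathcal V_g$ with $\hat u\in S(\hat f)$. Thus $(\hat f,\hat u)$ is feasible for problem \eqref{control}.

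Finally I would pass to the limit in the cost. Along the chosen subsequence, $(f_n,u_n)\rightharpoonup(\hat f,\hat u)$ in the weak topology of $\mathcal U\times\mathcal V$, so by the lower semicontinuity hypothesis $H(\mathscr F)$,
\[
\mathscr F(\hat f,\hat u)\leq\liminf_{n\to\infty}\mathscr F(f_n,u_n)=d^\star.
\]
Since $(\hat f,\hat u)$ is feasible we also have $\mathscr F(\hat f,\hat u)\geq d^\star$, whence $\mathscr F(\hat f,\hat u)=d^\star$ and $(\hat f,\hat u)$ is an optimal solution of \eqref{control}. This proves the theorem.

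The routine parts are the extraction of subsequences and the lsc argument; the only genuine content is that the solution map has the closedness/compactness property needed to keep $\hat u\in S(\hat f)$, and that is precisely what Theorem \ref{fn} supplies. The main obstacle, and the place where one must be slightly careful, is the compatibility of the function spaces: Theorem \ref{fn} is stated for controls in $L^{2/\alpha_1}(0,T;H_g)$, whereas the abstract control problem is phrased with $\mathcal U_g$ a general Hilbert space and $C\in\mathscr L(\mathcal U_g,H_g)$; one must therefore either restrict to the case $\mathcal U_g=L^{2/\alpha_1}(0,T;H_g)$ used in Theorem \ref{fn}, or route the argument through Corollary \ref{corn2}, noting that a bounded linear operator maps weakly convergent sequences to weakly convergent sequences so that $Cf_n\rightharpoonup C\hat f$ and Theorem \ref{fn} still applies. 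A secondary point worth stating explicitly is that $\mathscr F$ should be assumed bounded from below on the feasible set so that $d^\star\in\mathbb R$; otherwise $H(\mathscr F)$ alone does not prevent $d^\star=-\infty$.
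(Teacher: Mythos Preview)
Your proof is correct and follows essentially the same direct-method argument as the paper: choose a minimizing sequence, use $H(\mathcal U_{ad})$ to extract a weakly convergent subsequence of controls with limit in $\mathcal U_{ad}$, invoke Theorem~\ref{fn} to obtain the corresponding weak limit of states in $S(\hat f)$, and conclude via $H(\mathscr F)$. Your additional remarks on the finiteness of the infimum and on the function-space compatibility between $\mathcal U_g$ and the hypothesis of Theorem~\ref{fn} are points the paper does not address explicitly, but the structure of the argument is the same.
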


\begin{proof}
Let $(f_n,u_n)$ be a minimizing sequence for the problem \eqref{control}, i.e $f_n\in\mathcal U_{ad}$ and $u_n\in S(f_n)$ such that 
\[
\displaystyle\lim_{n\to\infty}\mathscr F(f_n,u_n)=\inf\left\{\mathscr F(f,u):\,f\in\mathcal U_{ad},\, u\in S( f) \right\}=:m.
\]
It follows that the sequence $f_n$ belongs to a bounded subset of the reflexive Banach space $L^{2/\alpha_1}(0,T; V_g)$. We may then assume that $f_n\rightarrow \hat f$ weakly in $L^{2/\alpha_1}(0,T;V_g')$ (by passing to a subsequence if necessary). By $H(\mathcal U_{ad})$, we have $\hat f\in\mathcal U_{ad}$. From Theorem \ref{fn}, we obtain, by again passing to a subsequence if necessary, that $u_n\rightarrow \hat u$ weakly in $L^2(0,T;V_g)$ and star-weakly in $L^\infty(0,T;H_g)$ with $\hat u\in S(\hat f)$. By $H(\mathscr F)$, we have $m\leq\mathscr F(\hat f,\hat u)\leq \displaystyle\liminf_{n\to\infty}\mathscr F(f_n,u_n)=m$. Which completes the proof.
\end{proof}

Let $f\in L^{2/\alpha_1}(0,T;V_g')$ and $z\in L^{\frac{2}{\alpha_1}}(0,T;H_g)$. Consider the following optimal problem $(P)$:
\[
\mathrm{Minimize}\, J(u,w)=\frac{1}{2}\int_0^T\int_\Omega(u(t,x)-u(s,x))^2\,g(x)dx\,dt+\int_0^Th(w(t))\,dt
\]over $(u,w)\in \left(L^2(0,T;V_g)\cap L^\infty(0,T;H_g)\right)\times L^{2/\alpha_1}(0,T;U_g)$ subject to $u\in S(Cw+f)$, i.e. $u$ satisfies

\begin{equation*}
 \left \{
   \begin{array}{r c l}
\,_0^C D_t^\alpha u+\nu A_g u+\nu C_g u+B_g(u,u)=&Cw+f\\
     u(0)=&u_0 .
   \end{array}
   \right .
\end{equation*}

We assume that:

\begin{itemize}
\item[$H(h)\quad$] The function $h:U_g\rightarrow \mathbb R$ is convex, lower semicontinuous and satisfies 
\[
|h(w)|\geq b_1|w|_{U_g}^{\frac{2}{\alpha_1}}+b_2
\]for some $b_1>0,\,b_2\in\mathbb R$.
\end{itemize}
\begin{theorem}
Assume $H(g)$ and $H(h)$. Then problem $(P)$ has at least one solution $(\hat u,\hat w)\in \left(L^2(0,T;V_g)\cap L^\infty(0,T;H_g)\right)\times L^{2/\alpha_1}(0,T;U_g)$.
\end{theorem}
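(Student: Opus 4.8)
The plan is to recast problem $(P)$ as an instance of the abstract control problem \eqref{control} and then invoke Theorem \ref{existencecontrol}. Concretely, I would set $\mathcal U_g = L^{2/\alpha_1}(0,T;U_g)$, identify the control $w$ with the force $Cw+f$ through the affine map $w\mapsto Cw+f$, and take $\mathcal U_{ad}=\mathcal U_g$ itself. The objective functional in the abstract formulation becomes $\mathscr F(w,u)=J(u,w)$, so what remains is to verify the three structural hypotheses $H(g)$, $H(\mathcal U_{ad})$ and $H(\mathscr F)$ in this concrete setting, and to handle the fact that $\mathcal U_{ad}=\mathcal U_g$ is unbounded, which is not literally covered by $H(\mathcal U_{ad})$.

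The step that needs genuine work is coercivity: since $\mathcal U_{ad}$ is all of $\mathcal U_g$, the minimizing sequence $\{w_n\}$ is not a priori bounded, so I would first use $H(h)$ to recover boundedness. Indeed, from $\int_0^T h(w_n(t))\,dt \geq b_1\|w_n\|^{2/\alpha_1}_{L^{2/\alpha_1}(0,T;U_g)} + b_2 T$ and the fact that $J(u_n,w_n)$ is bounded along a minimizing sequence (the first term of $J$ being nonnegative), one gets a uniform bound on $\|w_n\|_{L^{2/\alpha_1}(0,T;U_g)}$. Reflexivity of $L^{2/\alpha_1}(0,T;U_g)$ (recall $2/\alpha_1 > 2 > 1$) then yields a weakly convergent subsequence $w_n\rightharpoonup \hat w$. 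Then I would apply Corollary \ref{corn2} with this sequence: since $C\in\mathscr L(\mathcal U_g,H_g)$ is bounded and linear, $Cw_n \rightharpoonup C\hat w$ weakly in $L^{2/\alpha_1}(0,T;H_g)$, hence $Cw_n+f\rightharpoonup C\hat w+f$, and Corollary \ref{corn2} produces $\hat u\in S(C\hat w+f)$ with $u_n\rightharpoonup\hat u$ weakly in $L^2(0,T;V_g)$, weakly-star in $L^\infty(0,T;H_g)$, and (as noted in the proof of Theorem \ref{fn}) strongly in $L^2(0,T;H_g)$.

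Finally I would check lower semicontinuity of $J$ along this subsequence. The second term $\int_0^T h(w(t))\,dt$ is weakly lower semicontinuous in $w$ because $h$ is convex and lower semicontinuous (so the integral functional is weakly l.s.c. on $L^{2/\alpha_1}(0,T;U_g)$ by standard convex integrand theory). For the first term, write $\Phi(u)=\tfrac12\int_0^T\!\!\int_\Omega (u(t,x)-u(s,x))^2 g(x)\,dx\,dt$ — here I read the inner expression as a quadratic form in $u$ that is continuous on $L^2(0,T;H_g)$ (the strong topology), being essentially $\|u\|$-type in the $L^2_g$ norm; since $u_n\to\hat u$ strongly in $L^2(0,T;H_g)$, this term converges, so in particular it is lower semicontinuous. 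Combining, $J(\hat u,\hat w)\leq\liminf_{n} J(u_n,w_n)=\inf(P)$, and since $(\hat u,\hat w)$ is admissible, it is a minimizer. The main obstacle, and the only place requiring care beyond bookkeeping, is the coercivity/boundedness argument for the unbounded control set — one must be slightly careful that $H(h)$ as stated bounds $|h(w)|$ rather than $h(w)$ from below, but combined with the fact that along a minimizing sequence $J$ stays bounded above, the desired a priori bound on $\|w_n\|$ still follows (one may need to also assume or note $h$ is bounded below, which is implicit in $h$ being convex and lower semicontinuous with the stated growth). With boundedness in hand, the rest is a direct application of the compactness machinery already developed in Theorem \ref{fn} and its corollaries.
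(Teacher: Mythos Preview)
Your proposal is correct and follows essentially the same approach as the paper: take a minimizing sequence, use the coercivity in $H(h)$ to bound $\{w_n\}$ in $L^{2/\alpha_1}(0,T;U_g)$, extract a weakly convergent subsequence, apply Corollary \ref{corn2} to obtain a limit state $\hat u\in S(C\hat w+f)$, and conclude by lower semicontinuity. The paper's proof is terser and at the end gestures toward Theorem \ref{existencecontrol} with $\mathcal U_{ad}=L^{2/\alpha_1}(0,T;\mathrm{Im}\,C)$, whereas you verify the lower semicontinuity of each term of $J$ explicitly; your version is in fact more self-contained, and your remark that the coercivity step needs care (since $H(h)$ as written bounds $|h(w)|$) is a legitimate observation that the paper glosses over.
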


\begin{proof}
Let $(u_n,w_n)$ be a minimizing sequence of $(P)$. By $H(h)$, $(w_n)_n$ is bounded in $L^{2/\alpha_1}(0,T;U_g)$. Hence, there exists a subsequence converging weakly in $L^{2/\alpha_1}(0,T;U_g)$ to some $w$. It follows by Corollary \ref{corn2} that $u_n$ converges weakly in $L^2(0,T;V_g)$ and weakly-star in $L^\infty(0,T;H_g)$ to some $u\in S(Cw+f)$. Take $\mathcal U_{ad}=L^{2/\alpha_1}(0,T;\mathrm{Im}\,C)$ and note that $J(u,w)=\mathscr F(Cw+f,u)$. It follows by Theorem \ref{existencecontrol}, that problem $(P)$ has at least one solution as required.
\end{proof}

\par\bigskip

\subsection*{Acknowledgments}
We would like to express our gratitude to the Editor for taking time to handle the manuscript and to anonymous referees whose constructive comments are very helpful for improving the quality of our paper.

\end{document}